\newcommand{\bel}[1]{\begin{equation}\label{#1}}
\newcommand{\be}{\begin{equation}}
\newcommand{\ba}{\begin{eqnarray}}
\newcommand{\ea}{\end{eqnarray}}
\newcommand{\qe}{\end{equation}}
\newcommand{\R}{{\mathbb R}}
\newcommand{\SP}{\mathbb{S}}
\newcommand{\wt}{\widetilde}
\newcommand{\Al}{{\textit{Alex}(1)}}
\newcommand{\pt}{\mathrm{Pttn}}
\newcommand{\area}{\mathrm{Area}}
\newcommand{\cri}{\mathrm{cri}}
\newcommand{\PC}{\mathcal{PC}_{>0}}
\newcommand{\ST}{\mathcal{T}_{\SP^2}}
\newcommand{\ad}{\mathcal{A}{\mathrm{d}}\mathcal{P}}
\newcommand{\PCS}{\mathcal{PC}_{\geq 1}}
\newcommand{\Hmm}[1]{\leavevmode{\marginpar{\tiny%
$\hbox to 0mm{\hspace*{-0.5mm}$\leftarrow$\hss}%
\vcenter{\vrule depth 0.1mm height 0.1mm width \the\marginparwidth}%
\hbox to
0mm{\hss$\rightarrow$\hspace*{-0.5mm}}$\\\relax\raggedright #1}}}
\newtheorem{theorem}{Theorem}[section]
\newtheorem{lemma}[theorem]{Lemma}
\newtheorem{corollary}[theorem]{Corollary}
\newtheorem{definition}[theorem]{Definition}
\newtheorem{remark}[theorem]{Remark}
\newtheorem{proposition}[theorem]{Proposition}
\newtheorem{problem}[theorem]{Problem}
\newtheorem{assumption}{Assumption}
\begin{document}

\title[Areas of spherical polyhedral surfaces with regular faces]{Areas of spherical polyhedral surfaces with regular faces}

\author{Yohji Akama}
\email{akama@math.tohoku.ac.jp}
\address{Yohji Akama: Mathematical Institute, Graduate School of Science, Tohoku University,
Sendai, 980-0845, Japan.  \\ Tel.: +81-22-795-6402, Fax: +81-22-795-6400}

\author{Bobo Hua}
\email{bobohua@fudan.edu.cn}
\address{Bobo Hua: School of Mathematical Sciences, LMNS,
Fudan University, Shanghai 200433, China; Shanghai Center for
Mathematical Sciences, Fudan University, Shanghai 200433,
China.}

\author{Yanhui Su}
\email{suyh@fzu.edu.cn}
\address{Yanhui Su: College of Mathematics and Computer Science, Fuzhou University, Fuzhou
 350116, China}

\begin{abstract} 

 For a finite planar graph, it associates with some metric spaces, called (regular)
spherical polyhedral surfaces, by replacing faces with regular spherical
polygons in the unit sphere and gluing them edge-to-edge. We consider the class of planar graphs which admit
spherical polyhedral surfaces with the curvature bounded below by 1 in the
sense of Alexandrov, i.e. the total angle at each vertex is at most $2\pi$. We classify
all spherical tilings with regular spherical polygons, i.e. total angles at
vertices are exactly $2\pi$. We prove that for any graph in this class which does
not admit a spherical tiling, the area of the associated spherical polyhedral
surface with the curvature bounded below by 1 is at most $4\pi-\epsilon_0$ for
some $\epsilon_0>0$. That is, we obtain a definite gap between the area of
such a surface and that of the unit sphere.
 
  \bigskip
\noindent \textbf{Keywords.} 
 combinatorial curvature,
 critical area, gap, spherical tiling with regular spherical polygons.\\
 \noindent \textbf{Mathematics Subject Classification 2010:} 05C10,
 51M20, 52C20, 57M20.
 
 \end{abstract}

\maketitle



\par
\maketitle

\bigskip


\section{Introduction}\label{sec:intro}
The combinatorial curvature for planar graphs, as the generalization of the Gaussian curvature for surfaces,  was introduced by \cite{MR0279280,MR0410602,MR919829,Ishida90}. Many interesting geometric and combinatorial results have been obtained since then, see e.g.
\cite{MR1600371,Woess98,MR1864922,MR1797301,MR1894115,MR1923955,MR2038013,MR2096789,RBK05,MR2243299,MR2299456,MR2410938,MR2466966,MR2470818,MR2558886,MR2818734,MR2826967,MR3624614,Gh17}.

  Let $(V,E)$ be an
undirected simple graph with the set of vertices $V$ and the set of
edges $E.$ The graph $(V,E)$ is called \emph{planar} if it is topologically
embedded into the sphere or the plane. We write $G=(V,E,F)$ for the
combinatorial structure, or the cell complex, induced by the embedding
where $F$ is the set of faces, i.e. connected components of the
complement of the embedding image of the graph $(V,E)$ in the
target. Two elements in $V,E,F$ are called \emph{incident} if the
closures of their images have non-empty intersection. We say that a
planar graph $G$ is a \emph{planar tessellation} if the following hold,
see e.g. \cite{MR2826967}:
\begin{enumerate}[(i)]
\item Every face is homeomorphic to a disk whose boundary consists of finitely many edges of the graph.
\item Every edge is contained in exactly two different faces.
\item For any two faces whose closures have non-empty intersection, the intersection is either a vertex or an edge.
\end{enumerate}
\begin{assumption}
 We only consider planar tessellations and call them \emph{planar
 graphs} for the sake of simplicity.  For a planar graph, we always
 assume that for any vertex $x$ and face $\sigma,$ $$\deg(x)\geq 3,\
 \mathrm{deg}(\sigma)\geq 3$$ where $\deg(\cdot)$ denotes the degree of
 a vertex or a face.
 \end{assumption}
For a planar graph $G$, the \emph{combinatorial curvature} at the
vertex is defined as
\begin{equation}\label{def:comb}\Phi(x)=1-\frac{\deg(x)}{2}+\sum_{\sigma\in
F:x\in \overline{\sigma}}\frac{1}{\deg(\sigma)},\quad x\in
V,\end{equation} where the summation is taken over all faces $\sigma$
incident to $x.$ To digest the definition, we endow the ambient space of
$G$, $\SP^2$ or $\R^2,$ with a canonical piecewise flat metric and call
it the (regular) \emph{Euclidean polyhedral surface}, denoted by $S(G)$:
Replace each face by a regular Euclidean polygon of side-length one and
same facial degree, glue them together along their common edges, and
define the metric on $\SP^2$ or $\R^2$ via gluing metrics, see
\cite[Chapter~3]{MR1835418}. It is well-known that the generalized
Gaussian curvature on an Euclidean polyhedral surface, as a measure,
concentrates on the vertices. And one is ready to see that the
combinatorial curvature at a vertex is in fact the mass of the
generalized Gaussian curvature at that vertex up to the normalization
$2\pi,$ see e.g. \cite{MR2127379,MR3318509}.

We denote by $$\PC:=\{G=(V,E,F): \Phi(x)>0,\forall x\in V\}$$ the class
of planar graphs with positive combinatorial curvature everywhere. There
are many examples in $\PC,$ e.g. Platonic solids, Archimedean solids,
and Johnson solids~\cite{MR0185507,MR0227860,Zalgaller69}, and see \cite{RBK05,MR2836763,Gh17} for more. The
complete classification of $\PC$ is not yet known.
Note that by Alexandrov's embedding theorem~\cite{MR2127379}, the Euclidean polyhedral surface $S(G)$ of a finite planar graph $G\in \PC$ can be isometrically embedded into $\R^3$ as a boundary of a convex polyhedron. 

We review some known results on the class $\PC.$ 
In~\cite{MR2299456}, DeVos and Mohar proved that any graph $G\in \PC$ is
finite, which solves a conjecture of Higuchi~\cite{MR1864922}, see
\cite{MR0410602,MR2096789} for early results. In fact, for the set
\begin{align*}
 P:=\{G\colon G\in \PC\mbox{ is neither a prism nor an antiprism}\;\},
\end{align*}
DeVos and Mohar proved that $\sharp P<\infty$ and proposed to determine the number 
\begin{align*}C_{\SP^2}:=\max_{ (V,E,F) \in P}\sharp V.
\end{align*}
On the one hand, for the lower bound estimate of $C_{\SP^2}$ many
authors \cite{RBK05,MR2836763,Gh17,Ol17} attempted to construct large
examples in this class, and finally found some examples possessing $208$
vertices.  On the other hand, in~\cite{MR2299456}, DeVos and Mohar showed
that $C_{\SP^2}\leq 3444,$ which was improved to $C_{\SP^2}\leq 380$ by
Oh~\cite{MR3624614}. By a refined argument, in~\cite{Gh17}, Ghidelli completely solved the problem.
  \begin{theorem}\label{thm:208}
   \begin{enumerate}
\item \label{assert:208} $C_{\SP^2}=208.$ 
      \cite{Gh17,MR2836763}
      
  \item
\label{thm:41upper} 
$\max\{ \deg(\sigma)\colon \sigma\in F, (V,E,F)\in P\}\leq 41.$  \cite{Gh17}
\end{enumerate}\end{theorem}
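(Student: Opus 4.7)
The plan is to combine the combinatorial Gauss--Bonnet identity
$$\sum_{x \in V} \Phi(x) = \chi(\SP^2) = 2$$
with the fact that each $\Phi(x)$ is a positive rational number with bounded denominator, and to deduce both the face-degree bound and the vertex-count bound by a structural analysis of vertex-stars. I would prove part (2) first, since the enumeration underlying part (1) is infeasible without an a priori bound on $\deg(\sigma)$.

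For part (2), suppose toward a contradiction that some $G = (V,E,F) \in P$ contains a face $\sigma$ with $d = \deg(\sigma) \geq 42$. At every boundary vertex $v$ of $\sigma$, positivity reads
$$0 < \Phi(v) = 1 - \frac{\deg(v)}{2} + \frac{1}{d} + \sum_{\sigma' \ni v,\, \sigma' \neq \sigma} \frac{1}{\deg(\sigma')}.$$
Since $1/d$ is tiny, this forces $\deg(v) \leq 4$ together with very restricted degrees for the remaining faces at $v$ (typically the other faces are triangles or small polygons). Tracing this forced local pattern around the entire boundary of $\sigma$ shows that the collar of faces adjacent to $\sigma$ must look like the rim of a prism or antiprism cap, and global Gauss--Bonnet then forces the complementary side to close up into the matching cap, so $G$ itself is a prism or an antiprism, contradicting $G \in P$. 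I expect the principal obstacle to be lowering the threshold from Oh's bound $380$ to the sharp value $41$: this step requires ruling out hybrid collars that mix prism and antiprism local patterns around a single large face, and this is precisely where Ghidelli's refined case analysis is indispensable.

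With part (2) in hand, part (1) reduces to a combinatorial enumeration. The standing assumption $\deg(v) \geq 3$ together with $\deg(\sigma) \leq 41$ produces a finite, explicit list of vertex-patterns, i.e.\ multisets of face degrees around a vertex, that yield $\Phi(v) > 0$. Partitioning $V$ according to these patterns and summing via Gauss--Bonnet, I would isolate a short list of near-flat patterns such as $(3,3,3,3,4)$, $(3,3,3,3,5)$, and $(3,3,3,3,3,n)$, whose curvature is very small and whose multiplicity could therefore be large a priori, from the remaining patterns, which each contribute a definite amount to the curvature sum $2$. The key combinatorial step is to show that the near-flat patterns cannot propagate indefinitely without either creating a prism/antiprism substructure or forcing a neighboring high-curvature vertex; quantifying this exchange yields $|V| \leq 208$. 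The matching lower bound $|V| = 208$ is realized by the explicit example in \cite{MR2836763,RBK05}, so the crux of the theorem is the sharp upper bound. Naive curvature averaging only gives a far weaker estimate, and the face-degree bound from part (2) is essential for keeping the enumeration finite; I expect this final sharp count to be the most delicate step of the entire argument.
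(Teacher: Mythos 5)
This statement is not proved in the paper at all: it is quoted as an external result, with part (1) attributed to Ghidelli and to the constructions in \cite{MR2836763,RBK05}, and part (2) to Ghidelli's refinement of the DeVos--Mohar and Oh arguments. So there is no internal proof to compare against, and the question is only whether your sketch would stand on its own. It would not. Both of the decisive quantitative steps are asserted rather than proved, and you say so yourself: lowering the face-degree threshold to the sharp value $41$ is delegated to ``Ghidelli's refined case analysis,'' and the sharp count $208$ is delegated to ``quantifying this exchange.'' Those two steps \emph{are} the theorem; everything else in your outline (combinatorial Gauss--Bonnet summing to $2$, finiteness of the list of positively curved vertex patterns, the existence of near-flat patterns that must be controlled) is the standard setup already present in DeVos--Mohar, which only yields $C_{\SP^2}\leq 3444$.

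There is also a concrete flaw in the mechanism you propose for part (2). Local positivity at a boundary vertex of a $d$-gon does not restrict $d$ at all: the patterns $(3,3,d)$, $(3,4,d)$, $(3,5,d)$, $(3,6,d)$, $(3,7,d)$, $(4,4,d)$ and $(3,3,3,d)$ all have $\Phi>0$ for \emph{every} $d$, so ``$1/d$ is tiny'' forces nothing beyond membership in this list, and the collar of a large face need not resemble a prism or antiprism rim. Indeed the paper's Figure~1 exhibits a graph $\Gamma\in\PC$, neither a prism nor an antiprism, containing a $41$-gonal face whose collar mixes triangles, pentagons and heptagons. Any argument of the shape ``a large face forces a prism/antiprism cap'' that works for $d\geq 42$ would, if it were as soft as described, also apply to $d=41$ and wrongly exclude this example; the true proof must therefore be calibrated far more finely (Ghidelli's discharging analysis) than the qualitative collar-propagation you describe. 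The correct disposition here is to cite the result, as the paper does, rather than to present this outline as a proof.
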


In this paper, we study spherical polyhedral surfaces with faces
isometric to regular spherical polygons. Spherical polyhedral surfaces have been extensively studied in the literature, see e.g. \cite{MR0029518, MR0063681, LuoTian92, MR1835418, Luo06, Luo14}. A \emph{regular spherical polygon} is
the domain in the unit sphere $\SP^2(1)$ defined by the intersection of
finitely many hemispheres such that all side lengths and interior angles
are equal respectively.
\begin{assumption}\label{ass:hemisphere}We only consider regular spherical polygons
in $\SP^2(1)$ which have at least
three sides.\end{assumption} For any $n\geq 3$ and $0<a\leq \frac{2\pi}{n},$ there is a
regular spherical $n$-gon in $\SP^2(1)$ of side length $a$ contained in
a hemisphere, denoted by $\Delta_n(a),$ which is unique up to the
spherical isometry. 

Analogous to Euclidean polyhedral surfaces, we define spherical
polyhedral surfaces associated to a planar graph. For any finite planar
graph $G=(V,E,F)$ and $0<a\leq \inf_{\sigma\in
F}\frac{2\pi}{\deg(\sigma)},$ we replace each face by a regular
spherical polygon in $\SP^2(1)$ of side length $a,$ and glue them
together along their common edges. This induces a metric structure on
the ambient space of $G,$ called \emph{spherical polyhedral surface} of
$G$ with side length $a$ and denoted by $S_a(G).$ We denote by
$\area_a(G)$ the area of $S_a(G).$ For any $x\in V,$ the total angle at
$x$ measured in $S_a(G)$ is denoted by $\theta_a(x),$ and the angle
defect at $x$ is defined as $$K_a(x)=2\pi-\theta_a(x).$$ By the
spherical geometry, we yield the Gauss-Bonnet theorem on spherical
polyhedral surfaces.
\begin{theorem}\label{thm:GaussBonnet} For any finite planar graph
 $G=(V,E,F)$ and for any positive number $a\leq \inf_{\sigma\in F}\frac{2\pi}{\deg(\sigma)},$
\begin{equation}\label{GaussBonnet}\area_a(G)+\sum_{x\in V}K_a(x)=4\pi.\end{equation}
\end{theorem}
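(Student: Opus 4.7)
The plan is to apply the classical spherical excess formula face-by-face and assemble the contributions using Euler's formula. Since $G=(V,E,F)$ is a finite planar tessellation, the ambient space of $G$, which carries the metric $S_a(G)$, is homeomorphic to $\SP^2$; hence Euler's formula yields $|V|-|E|+|F|=2$. The hypothesis $a\leq \inf_{\sigma\in F} 2\pi/\deg(\sigma)$ ensures that each regular spherical $\deg(\sigma)$-gon is well-defined and sits inside a hemisphere, so the spherical excess formula applies.

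First, for each $\sigma\in F$ with $n=\deg(\sigma)$, let $\alpha_\sigma$ denote the interior angle of $\Delta_n(a)$. The spherical excess formula for a spherical $n$-gon gives
$$\area(\Delta_n(a))=n\alpha_\sigma-(n-2)\pi.$$
Summing across all faces produces
$$\area_a(G)=\sum_{\sigma\in F}\deg(\sigma)\,\alpha_\sigma-\pi\sum_{\sigma\in F}\deg(\sigma)+2\pi|F|.$$

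Next, I would reindex the first sum. It counts, once per face, the sum of that face's interior angles at its $\deg(\sigma)$ corners, which is equivalent to summing over all (vertex, incident face) pairs. Reorganizing by vertex, the total angular contribution at $x$ from its incident faces is exactly $\theta_a(x)$, so
$$\sum_{\sigma\in F}\deg(\sigma)\,\alpha_\sigma=\sum_{x\in V}\theta_a(x)=2\pi|V|-\sum_{x\in V}K_a(x).$$
For the middle sum, property (ii) of a planar tessellation (each edge is shared by exactly two faces) gives the handshake identity $\sum_{\sigma\in F} \deg(\sigma)=2|E|$. Substituting both identities and invoking Euler's formula,
$$\area_a(G)=2\pi|V|-\sum_{x\in V}K_a(x)-2\pi|E|+2\pi|F|=2\pi\bigl(|V|-|E|+|F|\bigr)-\sum_{x\in V}K_a(x)=4\pi-\sum_{x\in V}K_a(x),$$
which is the desired identity.

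The argument has no deep obstacle: it is the assembly of three classical ingredients (spherical excess, face--edge handshake, Euler's formula). The only care required is in the double-counting step identifying $\sum_\sigma \deg(\sigma)\alpha_\sigma$ with $\sum_x \theta_a(x)$, and in verifying that the hemisphere constraint of Assumption~\ref{ass:hemisphere} together with $a\leq \inf_\sigma 2\pi/\deg(\sigma)$ really does guarantee that the spherical excess formula applies to every face $\Delta_{\deg(\sigma)}(a)$.
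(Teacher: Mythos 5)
Your proposal is correct and follows essentially the same route as the paper's proof: the spherical excess formula per face, the reindexing of the angle sum from faces to vertices, the handshake identity $\sum_{\sigma\in F}\deg(\sigma)=2\sharp E$, and Euler's formula. The paper's version is just a more compressed writing of the same computation.
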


We say that a geodesic metric space $(X,d)$ has the (sectional)
curvature bounded below by $1$ in the sense of Alexandrov if it
satisfies the Toponogov triangle comparison property with respect to  the unit
sphere, and denote by $\Al$ the set of such spaces, see
e.g. \cite{MR1835418}.  It is well-known that $S_a(G)\in \Al$ if and
only if $$K_a(x)\geq 0,\quad \forall x\in V.$$ By Alexandrov and
Pogorelov's theorem, they can be embedded into $\SP^3$ as the boundary of
convex polyhedron~\cite{MR0029518,MR0063681}. In fact, for each vertex
$x$ with $\theta_a(x)\leq 2\pi,$ there is a neighborhood of $x$ in
$S_a(G)$ which is isometric to a neighborhood of a pole in the
$1$-suspension of a circle of length $\theta_a(x),$ and the result
follows from \cite[Theorem~10.2.3]{MR1835418}. We denote by
$$\PCS:=\{G=(V,E,F): \mathrm{\ there\ exists\ } a>0 \mathrm{ \ such\
that\ } S_a(G)\in \Al\}$$ the class of finite planar graphs whose
spherical polyhedral surfaces have the curvature bounded below by $1$ in
the Alexandrov sense. We will prove that $$\PCS=\PC,$$ see
Proposition~\ref{prop:pcspc}. This suggests a possible way to study the
class $\PCS$ by known results on the class $\PC,$ where the latter
refers to the Euclidean setting.

In a planar graph, the pattern of a vertex $x$ is defined as a vector $$\pt(x):=(\mathrm{deg}(\sigma_1),\mathrm{deg}(\sigma_2),\cdots,\mathrm{deg}(\sigma_{N})),$$ where $\{\sigma_i\}_{i=1}^N$ are the faces incident to $x$ ordered by $\mathrm{deg}(\sigma_1)\leq\mathrm{deg}(\sigma_2)\leq\cdots\leq\mathrm{deg}(\sigma_{N}),$ and $N=\deg(x).$

\begin{definition}\label{def:critical side-length}For any vertex $x$ of pattern  $(\mathrm{deg}(\sigma_1),\mathrm{deg}(\sigma_2),\cdots,\mathrm{deg}(\sigma_{N}))$ with positive combinatorial curvature, we define \emph{the critical side-length of the vertex} $x$ (or for the pattern) by
 \begin{eqnarray}a_c(x)&=&a_c(\mathrm{deg}(\sigma_1),\mathrm{deg}(\sigma_2),\cdots,\mathrm{deg}(\sigma_{N}))
  \nonumber \\
 &:=&\max\left\{a\in\left[0,\frac{2\pi}{\mathrm{deg}(\sigma_{N})}\right]:
 K_a(x)\geq 0\right\}. \label{def:ac} \end{eqnarray}
For a planar graph $G\in\PCS$, \emph{the critical side-length of $G$} is defined as
 $$a_c(G):=\min_{x\in V}a_c(x).$$
\emph{The critical area of $G$} is defined as
$$\area^{\cri}(G):=\area_{a_c(G)}(G).$$ 
\end{definition}

Clearly $\area_a(G)$ is monotonely increasing in $a$. So, $$\area^{\cri}(G)=\max\{\area_a(G):S_a(G)\in \Al\}.$$ 

We say that a planar graph $G$ \emph{admits a spherical tiling with regular spherical polygons} if there is a spherical tiling with regular spherical polygons whose planar graph structure is isomorphic to $G,$ and denote by $\ST$ the set of such planar graphs. One is ready to prove the following proposition.

\begin{proposition}\label{prop:sphere}For a planar graph $G,$ the following are equivalent:
\begin{enumerate}
\item $G\in \ST.$ 
\item There is some $a>0$ such that $S_a(G)$ is isometric to the unit sphere, i.e. $K_a(x)=0$ for all $x\in V.$
\item $a_c(x)=a_c(y)$ for all $x,y\in V,$ and $K_{a_c(x)}(x)=0$ for all $x\in V.$
\item $\area^{\cri}(G)=4\pi.$
\end{enumerate}
\end{proposition}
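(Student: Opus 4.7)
I prove the cycle $(1) \Rightarrow (2) \Rightarrow (3) \Rightarrow (4) \Rightarrow (1)$, using only Theorem~\ref{thm:GaussBonnet} and the strict monotonicity of the interior angle $\alpha_n(a)$ of $\Delta_n(a)$ as a function of $a \in (0, 2\pi/n]$. This monotonicity follows from Napier's rules applied to the right spherical triangle with vertices the center of $\Delta_n(a)$, a vertex of $\Delta_n(a)$, and the midpoint of an adjacent edge, giving $\cos(\alpha_n(a)/2) = \sin(\pi/n)\cos(a/2)$, whose right-hand side is strictly decreasing on $(0, 2\pi/n]$.

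For $(1) \Rightarrow (2)$, any two regular spherical polygons glued along an edge must have equal side length, so in any spherical tiling realizing $G$ all edges share a common length $a$; the tiling then equals $S_a(G)$ by definition, so $S_a(G)$ is isometric to $\SP^2(1)$ and $K_a \equiv 0$. For $(2) \Rightarrow (3)$, the above monotonicity makes $\theta_a(x) = \sum_i \alpha_{\deg(\sigma_i)}(a)$ strictly increasing in $a$, hence $K_a(x)$ strictly decreasing; the condition $K_a(x) = 0$ from (2) thus identifies this common $a$ as exactly $a_c(x)$ for every $x$, so the critical side-lengths coincide and the defects vanish at the common value. For $(3) \Rightarrow (4)$, letting $a := a_c(x) = a_c(G)$, Theorem~\ref{thm:GaussBonnet} gives $\area^{\cri}(G) = \area_a(G) = 4\pi - \sum_x K_a(x) = 4\pi$.

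The substantive step is $(4) \Rightarrow (1)$. Set $a := a_c(G)$; since $a \leq a_c(x)$ for each $x$, monotonicity gives $K_a(x) \geq K_{a_c(x)}(x) \geq 0$. Theorem~\ref{thm:GaussBonnet} together with $\area_a(G) = 4\pi$ forces $\sum_x K_a(x) = 0$, so $K_a(x) = 0$ for every $x$. The total angle at every vertex is then $2\pi$ and no cone singularity remains: $S_a(G)$ is a closed Riemannian surface of constant Gaussian curvature $1$, topologically $\SP^2$ (since $G$ is a planar tessellation), and hence simply connected. The developing map is therefore a local isometry $S_a(G) \to \SP^2(1)$, which is a covering map, and is an isometry by equality of areas. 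Transporting the face decomposition of $G$ along this isometry exhibits $G$ as a spherical tiling by regular spherical polygons, so $G \in \ST$.

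The main obstacle is exactly this globalization step: going from the vanishing of all vertex defects to a genuine isometry $S_a(G) \cong \SP^2(1)$. Everything else is Gauss--Bonnet bookkeeping and elementary spherical trigonometry.
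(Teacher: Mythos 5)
Your proof is correct and follows the same cycle $(1)\Rightarrow(2)\Rightarrow(3)\Rightarrow(4)\Rightarrow(1)$ as the paper, but the substantive implication is handled by a genuinely different route. The paper places the globalization at $(3)\Rightarrow(4)$ (every vertex has total angle $2\pi$, so the surface is locally isometric to $\SP^2(1)$ and hence globally isometric to it, whence the area is $4\pi$) and disposes of $(4)\Rightarrow(1)$ in one line by invoking the rigidity case of the Bishop--Gromov volume comparison for Alexandrov surfaces with curvature at least $1$ \cite[Exercise~10.6.12]{MR1835418}. You instead make $(3)\Rightarrow(4)$ pure Gauss--Bonnet bookkeeping and do the real work at $(4)\Rightarrow(1)$: from $K_{a_c(G)}(x)\geq K_{a_c(x)}(x)\geq 0$ and the vanishing of the total defect forced by Theorem~\ref{thm:GaussBonnet}, every defect vanishes, and the developing map of the resulting smooth, simply connected, constant-curvature-$1$ surface is an isometry onto $\SP^2(1)$. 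This is a more elementary and self-contained argument that avoids the Alexandrov rigidity theorem entirely, at the cost of spelling out the developing-map/covering step; both approaches are sound, and your final step of transporting the face decomposition to exhibit $G\in\ST$ is the right way to close the loop. One small slip: your Napier identity should read $\sin\bigl(\alpha_n(a)/2\bigr)\cos(a/2)=\cos(\pi/n)$, as in the paper; the formula you wrote, $\cos\bigl(\alpha_n(a)/2\bigr)=\sin(\pi/n)\cos(a/2)$, is the relation involving the apothem rather than the half side-length. The strict monotonicity of $\alpha_n(a)$ that you actually use is unaffected, since either identity yields it.
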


For any $G\in \ST,$ consider the spherical tiling in the unit sphere. We
observe that the convex hull of the vertices in $\R^3$ is a convex
polyhedron with regular Euclidean polygons as faces.  Combining the
above proposition with the classification of such convex polyhedra
\cite{MR0185507,Zalgaller69,MR0227860}, we classify the class $\ST$ in
the following theorem. 
 \begin{theorem}\label{thm:classification} The set  $\ST$ consists of the following:
\begin{enumerate}[(a)]
\item the {5} Platonic solids,
\item the 13 Archimedean solids, 
\item the infinite series of prisms and antiprisms,
\item \label{list:johnson}22 Johnson solids $J1,$ $J3,$  $J6,$ $J11,$ $J19,$
$J27,$ $J34,$  $J37,$ $J62,$ $J63,$
$J72,$ $J73,$  $J74,$ $J75,$ $J76,$
$J77,$ $J78,$  $J79,$ $J80,$ $J81,$ $J82,$ $J83.$
\end{enumerate}
 \end{theorem}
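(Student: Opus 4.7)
The plan is to use Proposition~\ref{prop:sphere}(2): a planar graph $G$ lies in $\ST$ precisely when there exists $a>0$ such that $S_a(G)$ is isometric to $\SP^2(1)$. Given such an isometry, I would realize the faces as genuine regular spherical polygons in $\SP^2(1)\subset\R^3$ and form $P:=\mathrm{conv}(V)\subset\R^3$, the Euclidean convex hull of the vertex set. The goal is to recognize $P$ as a convex polyhedron, combinatorially isomorphic to $G$, whose faces are regular Euclidean polygons of a common edge length and whose vertex set lies on a common sphere. Once this is done, the problem reduces to the classical Johnson--Zalgaller classification of convex polyhedra with regular faces.

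The verification breaks into three routine steps: (i) each spherical face, being a \emph{regular} spherical polygon, has its vertices equally spaced on a small circle of $\SP^2(1)$, so their Euclidean convex hull is a regular Euclidean polygon; (ii) any two adjacent faces of $S_a(G)$ share a spherical edge of length $a$, hence a Euclidean chord of length $2\sin(a/2)$, so all edges of $P$ have the same Euclidean length; (iii) since each face is contained in a hemisphere (by Assumption~\ref{ass:hemisphere} together with $a\leq 2\pi/\deg(\sigma)$), the affine plane through the vertices of each face supports $P$ from outside, so the combinatorial face structure of $P$ coincides with $G$. Conversely, given any convex polyhedron with regular faces of common edge length inscribed in a sphere, the radial projection from the sphere centre produces a spherical tiling by regular spherical polygons: equal chords give arcs of equal length, and the rotational symmetry of each face about the axis joining its centre to the sphere's centre forces equal spherical interior angles.

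This reduces the theorem to identifying, among the polyhedra produced by the Johnson--Zalgaller classification \cite{MR0185507,MR0227860,Zalgaller69} (the 5 Platonic solids, 13 Archimedean solids, prisms, antiprisms, and 92 Johnson solids), which are inscribable in a sphere. Platonic and Archimedean solids are vertex-transitive and hence automatically inscribed; every prism and antiprism over a regular polygon is inscribed in a sphere by rotational symmetry about the polygonal axis; this already yields the families (a), (b), (c). The main obstacle is the case-by-case analysis of the 92 Johnson solids, where vertex-transitivity fails. I would attack this by explicit coordinate computation for each $J_i$, showing that exactly the 22 solids listed in~(\ref{list:johnson}) have all vertices equidistant from a common centre, while for the remaining 70 the vertex set splits into orbits lying on two or more distinct concentric spheres and is thus excluded. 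This last enumeration is where essentially all the real work lies.
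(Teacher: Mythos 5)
Your first step (pass to the Euclidean convex hull of the vertex set and invoke the Johnson--Zalgaller classification of convex polyhedra with regular faces) is exactly the paper's, and the forward direction is essentially fine. The gap is in your converse and in the final enumeration criterion. You reduce membership in $\ST$ to the existence of a circumsphere, claiming that ``exactly the 22 solids listed in~(\ref{list:johnson}) have all vertices equidistant from a common centre, while for the remaining 70 the vertex set splits into \dots two or more concentric spheres.'' This is false: $25$ Johnson solids are inscribable in a sphere, namely the $22$ listed together with $J2$ (pentagonal pyramid), $J4$ (square cupola) and $J5$ (pentagonal cupola), and these three are \emph{not} in $\ST$ (see Remark~\ref{rem}). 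Your criterion would therefore produce three extra solids. The flaw in the converse argument is the unstated assumption that the circumcentre lies in the interior of the polyhedron. For $J2$, for instance, the circumcentre lies strictly below the plane of the pentagonal base, so radial projection from the centre is not a homeomorphism of the boundary onto the sphere, and the image of the pentagonal face is not a regular spherical polygon in the sense required by Assumption~\ref{ass:hemisphere} (it is not an intersection of hemispheres containing the face). Equivalently, at a base vertex of pattern $(3,3,5)$ one has $K_{a_c}(3,3,5)>0$, so the total angle can never be made equal to $2\pi$ with admissible regular spherical faces. Inscribability is necessary but not sufficient.

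The paper avoids this trap by using a different, purely local criterion for the case-by-case check of the $92$ Johnson solids: condition $(3)$ of Proposition~\ref{prop:sphere}, namely that all vertex patterns occurring in the solid have a common critical side-length $a_c$ and that the angle defect $K_{a_c}$ vanishes at every one of them. This condition depends only on the multiset of vertex patterns, is decided by solving $\theta_a(x)=2\pi$ pattern by pattern, and automatically discards $J2$, $J4$, $J5$ along with the $67$ non-inscribable solids. To repair your argument you would either need to adopt that criterion, or strengthen your geometric condition to ``circumsphere exists \emph{and} its centre lies in the interior of the solid'' (and prove that this strengthened condition is equivalent to membership in $\ST$), and then redo the coordinate enumeration accordingly.
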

\begin{remark}\label{rem} The Johnson solids in the list \eqref{list:johnson} coincide with the
 Johnson solids with a circumsphere containing all vertices, except
 Johnson solids
 $J2,$ $J4,$ and $J5,$ see Wikipedia~\cite{wikijohnson}. $J2$~$(J4$,
 $J5$, resp.$)$ is excluded from the list~\eqref{list:johnson}  by Proposition~\ref{prop:sphere}, since it has $5$~$(8, 10$, resp.$)$ vertices $x$'s of pattern
 $(3,3,5)$~$((3,4,8)$, $(3,4,10)$, resp.$)$ such that
 $K_{a_c(x)}(x)>0$. Actually, such $x$'s are exactly the vertices of the
 unique $5$~$(8, 10$, resp.$)$-gonal face $F$ of $J2$~$(J4$, $J5$,
 resp.$)$, where $F$ is not contained in a hemisphere. {So $F$ is
 not a regular spherical polygon.}
\end{remark}
In \cite{MR929249}, Milka proved that in spherical three-dimensional
space there exists only finite number of (combinatorial types of) convex polyhedra with
equiangular faces, except two infinite families -- prisms and
antiprisms. However, he did not use the critical side-length $a_c$.
For other classification results on spherical
tilings, one refers to \cite{MR886775,MR3022611,MR661770}, for example.
For a combinatorial approach to classify tilings of constant curvature
spaces, see \cite{MR825773}.

We are interested in critical areas of planar graphs in $\PCS$. In particular, we propose the following problem.
\begin{problem}\label{prob:minarea} What are the following constants:
 \begin{enumerate}
 \item
$\area_{\min}:=\inf_{G\in \PCS} \area^{\cri}(G),$ and
\item 
$\wt{\area_{\max}}:=\sup_{G\in \PCS\setminus \ST} (\area^{\cri}(G))?$
\end{enumerate}
\end{problem}

Both constants in the above problem are attained by some specific graphs
in $\PCS,$ by Theorem~\ref{thm:208}~\eqref{assert:208}, since all prisms and antiprisms
admit spherical tilings with regular spherical polygons and have critical area $4\pi.$ In this paper, we will give quantitative bounds for the constants in the problem. 

The first part of Problem~\ref{prob:minarea} is devoted to the minimal critical area in the class $\PCS.$ We prove the following theorem. 
\begin{theorem}\label{thm:minarea}$8.3755\times 10^{-2}\leq \area_{\min}\leq 2.0961\times 10^{-1}.$
\end{theorem}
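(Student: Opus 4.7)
For any $G = (V, E, F) \in \PCS$, the plan is to localize the area bound to a single vertex whose pattern determines $a_c(G)$. Pick $x_0 \in V$ attaining $a_c(x_0) = a_c(G)$ and set $a = a_c(G)$. Writing $A_n(a)$ for the area of $\Delta_n(a)$, every face area is positive, and the $N = \deg(x_0)$ faces incident to $x_0$ are distinct, so Theorem~\ref{thm:GaussBonnet} immediately gives
\begin{equation*}
\area^{\cri}(G) = \sum_{\sigma \in F} A_{\deg(\sigma)}(a) \;\geq\; \sum_{i=1}^N A_{p_i}(a) \;=:\; S(P),
\end{equation*}
where $P = \pt(x_0) = (p_1,\ldots,p_N)$ with $\Phi(P) > 0$. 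Thus the problem reduces to a pointwise lower bound on $S(P)$ over admissible patterns. Prisms and antiprisms lie in $\ST$ by Proposition~\ref{prop:sphere}, so any $G$ with such a minimizing vertex has $\area^{\cri}(G) = 4\pi$ and is irrelevant for $\area_{\min}$. For the remaining cases, Theorem~\ref{thm:208}\eqref{thm:41upper} gives $\max_i p_i \leq 41$, and combined with the constraint $\sum 1/p_i > (N-2)/2$ this leaves only a finite list of patterns $P$ to inspect (essentially the $N=3$ cases $(3,q,r),(4,q,r),(5,q,r)$ together with a few $N \geq 4$ analogues).

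\textbf{Numerical minimization of $S(P)$.} For each pattern $P$ in the finite list the critical side length $a_c(P)$ is implicitly defined by $\sum_i \alpha_{p_i}(a_c(P)) = 2\pi$, where $\alpha_n(a)$ is determined by the standard spherical right-triangle identities
\begin{equation*}
\sin(a/2) = \sin r \sin(\pi/n), \qquad \tan h = \tan r \cos(\pi/n), \qquad \cos(\alpha_n(a)/2) = \sin(\pi/n)\cos h,
\end{equation*}
between side length, circumradius $r$, apothem $h$, and interior angle $\alpha_n(a)$. Writing $A_n(a) = n\alpha_n(a) - (n-2)\pi$, one evaluates $S(P) = \sum_i A_{p_i}(a_c(P))$ and shows the minimum is attained at $P_* = (3,11,13)$, where $\Phi = 1/858$, with $S(P_*) \geq 8.3755\times 10^{-2}$. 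The main obstacle is that small $\Phi(P)$ does \emph{not} by itself imply small $S(P)$: the triple $(3,7,41)$, for example, has $\Phi = 1/1722$ smaller than that of $(3,11,13)$, but the factor $p_N = 41$ multiplied by an angle excess $\epsilon_{41}$ proportional to $\cot(\pi/41)$ produces a strictly larger $S$. The argument must therefore track the precise trade-off between the smallness of $a_c(P)$ and the coefficients $p_i$, and one expects this to be done by a case analysis across the finite family of admissible patterns rather than a single monotonicity argument.

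\textbf{Upper bound construction.} For the upper bound the plan is to exhibit an explicit $G_* \in \PCS$ with $\area^{\cri}(G_*) \leq 2.0961\times 10^{-1}$. Note that no finite planar tessellation can have every vertex of the optimal pattern $(3,11,13)$: such uniformity would force via Euler $V = 2/\Phi = 1716$, vastly exceeding the bound $C_{\SP^2} = 208$ of Theorem~\ref{thm:208}\eqref{assert:208}. The construction must therefore mix patterns, with the vertex selected by $\min_x a_c(x)$ of small-$\Phi$ type and the remaining faces contributing only a small additional area at the correspondingly small side length $a_c(G_*)$. I would build such a $G_*$ as a minimal planar tessellation containing one vertex of a small-$\Phi$ pattern (ideally $(3,11,13)$ itself, or a closely related realizable neighbor) together with a small number of further faces whose vertices all have strictly larger $a_c$; once $G_*$ is constructed one computes $\area^{\cri}(G_*) = \sum_{\sigma\in F_*} A_{\deg(\sigma)}(a_c(G_*))$ directly from the formulas above. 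The hard part is combinatorial realizability: one must actually exhibit a planar tessellation with the prescribed local patterns that closes up on $\SP^2$ while keeping $|F_*|$ and the individual face degrees small, and then compare candidates to get below $0.21$.
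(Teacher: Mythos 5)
Your lower bound argument is essentially the paper's: pick a vertex $x_0$ attaining $a_c(x_0)=a_c(G)$, bound $\area^{\cri}(G)$ below by the sum of areas of the faces incident to $x_0$, reduce to a finite list of admissible patterns via Theorem~\ref{thm:208}~\eqref{thm:41upper} (the paper is in fact slightly less careful than you about setting aside prisms and antiprisms, which is a legitimate point since those have unbounded facial degree but critical area $4\pi$), and then minimize numerically over the list, finding the minimum $8.3755\times 10^{-2}$ at $(3,11,13)$ with second minimum at $(3,7,41)$. This part is sound and matches the paper, modulo the fact that both you and the paper ultimately delegate the case-by-case evaluation to a finite numerical computation.

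The upper bound, however, has a genuine gap: you never exhibit the graph $G_*$. You correctly identify that the whole difficulty is combinatorial realizability --- constructing a planar tessellation in $\PCS$ that closes up on $\SP^2$ while containing a vertex of very small critical side-length --- and then you stop exactly there, saying you ``would build'' such a $G_*$. The specific constant $2.0961\times 10^{-1}$ cannot be produced without a concrete graph. The paper resolves this by taking Ghidelli's explicit $208$-vertex-scale example $\Gamma$ containing a vertex of pattern $(3,7,41)$ (not $(3,11,13)$), for which $a_c(\Gamma)=a_c(3,7,41)\approx 0.0304$; summing the areas of its $61$ triangles, $15$ pentagons, $11$ heptagons and one $41$-gon at that side length gives exactly $2.0961\times 10^{-1}$. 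Note also that your instinct to aim for a graph whose critical vertex has pattern $(3,11,13)$ is a different (and unrealized) route; the known construction uses $(3,7,41)$, and whether a realizable graph with critical pattern $(3,11,13)$ and few faces exists is precisely the open combinatorial question you flag but do not settle. As written, your proposal proves the lower bound but not the upper bound.
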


For the upper bound estimate in the theorem, we provide a graph in $\PCS$ with small critical area. For the lower bound estimate, we use the following local argument: For any graph $G\in \PCS,$ we consider the local structures of vertices which attain the critical side-length of the graph. We bound the critical area of the graph below by the summation of areas of faces incident to such a vertex, and compute the results case by case. This is amenable by  the result of Ghidelli, Theorem~\ref{thm:208}~\eqref{thm:41upper}, which considerably reduces the cases of possible vertex patterns, up to the facial degree $41$.

Next, we consider the second part of Problem~\ref{prob:minarea}. By Proposition~\ref{prop:sphere}, $$\area^{\cri}(G)=4\pi,\quad \mathrm{for\ any\ } G\in \ST,$$ which attains the maximal critical area in the class $\PCS,$ by the volume comparison theorem for Alexandrov spaces with curvature at least $1,$ see \cite{MR1835418}. The problem is to determine the maximum of critical areas in the class $\PCS$ except spherical tilings $\ST.$ We prove the following quantitative result.

\begin{theorem}\label{thm:gap}$4\pi-2.5678\times {10^{-1}}\leq \wt{\area_{\max}} \leq 4\pi-1.6471\times {10^{-5}}.$
\end{theorem}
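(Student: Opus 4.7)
The plan is to handle the two inequalities independently, both via the Gauss--Bonnet identity of Theorem~\ref{thm:GaussBonnet}, which gives
\[
\area_{a_c(G)}(G)=4\pi-\sum_{x\in V}K_{a_c(G)}(x).
\]
So the upper bound on $\wt{\area_{\max}}$ reduces to a uniform positive lower bound on the total angle defect over $\PCS\setminus\ST$, and the lower bound to producing one explicit $G_0\in\PCS\setminus\ST$ with small total defect.

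For the lower bound $\wt{\area_{\max}}\ge 4\pi-2.5678\times 10^{-1}$, I would exhibit an explicit finite planar graph $G_0\in\PCS\setminus\ST$ with the prescribed critical area. A natural candidate is a mild combinatorial perturbation of one of the spherical tilings classified in Theorem~\ref{thm:classification}: starting from a Johnson or Archimedean solid in $\ST$, one replaces a few faces so that two vertex patterns $p$ and $q$ coexist in the new graph with $a_c(p)$ slightly smaller than $a_c(q)$. Then $a_c(G_0)=a_c(p)$, the defect $K_{a_c(G_0)}(\cdot)$ vanishes at $p$-vertices and is a small positive number at $q$-vertices, and the total defect is evaluated explicitly by spherical trigonometry; the construction is tuned so that the sum hits the target value.

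For the upper bound, fix $G\in\PCS\setminus\ST$ and set $a=a_c(G)$. By Proposition~\ref{prop:sphere} the defects $K_a(x)$ are not all zero, so some vertex $y$ satisfies $K_a(y)>0$; it suffices to bound this one defect below by $\epsilon_0:=1.6471\times 10^{-5}$. Choose $x^{*}$ with $a_c(x^{*})=a$; then either (i)~$a_c(y)>a$, in which case $K_a(y)$ is a function of the ordered pair of patterns $(\pt(x^{*}),\pt(y))$, or (ii)~$a_c(y)=a$ and $K_a(y)>0$, which forces $a=2\pi/\deg(\sigma_N(y))$ and makes $K_a(y)$ depend only on $\pt(y)$. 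Since $\Phi(x)>0$ forces $\deg(x)\le 5$, and Theorem~\ref{thm:208}\eqref{thm:41upper} bounds facial degrees by $41$ for any graph in $\PCS\setminus\ST$ (prisms and antiprisms all lying in $\ST$), only finitely many patterns can occur. I would numerically compute $g(p,q):=K_{a_c(p)}(q)$ for every ordered pair with $a_c(p)<a_c(q)$, and $g(p):=K_{2\pi/\deg(\sigma_N(p))}(p)$ for every pattern $p$ falling under case (ii), and verify that the minimum of all these strictly positive quantities is exactly $\epsilon_0$.

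The main obstacle is the rigorous execution of this enumeration on the upper-bound side. The interior angle of $\Delta_n(a)$ is a transcendental function of $a$ coming from spherical trigonometry, so each $a_c(\cdot)$ must be located by numerical root-finding and each $g(p,q)$ computed with certified error bounds small enough to guarantee that no unseen pair undercuts $1.6471\times 10^{-5}$. Identifying the extremal pair responsible for the tiny gap, and on the lower-bound side producing a concrete $G_0$ realising an extremal configuration combinatorially (with the property that it is a genuine planar tessellation satisfying Assumption~1), is the delicate computational heart of the argument.
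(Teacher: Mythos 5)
Your upper\-/bound argument is essentially the paper's: use Gauss--Bonnet (Theorem~\ref{thm:GaussBonnet}) to convert the area gap into a total angle defect, localize at a vertex $x^{*}$ realizing $a_c(G)$, split according to whether the witnessing vertex $y$ with $K_{a_0}(y)>0$ has $a_c(y)>a_0$ or $a_c(y)=a_0$, and minimize the resulting pattern quantities $K_{a_c(p)}(q)$ over the finitely many admissible patterns supplied by Ghidelli's bound $f\le 41$. The paper's enumeration lands on the extremal pair $K_{a_c(3,7,29)}(3,9,16)=1.6471\cdots\times 10^{-5}$, and it addresses the certification worry you raise by symbolically isolating the set $Z$ of pattern pairs for which $K_{a_c(p)}(q)$ is exactly zero (for instance $\{(3,4,5),(4,4,4)\}$ and the pairs arising from Johnson solids in $\ST$), then checking that the computed minimum is stable under a margin $\pm 10^{-5}$ in the defining inequalities and well separated from the second minimum. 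One point you leave implicit is the exclusion of $\pt(x^{*})=(3,3,3)$: by Proposition~\ref{prop:maxc} this pattern uniquely maximizes $a_c$, so if $x^{*}$ had it then every vertex would, forcing $G$ to be the tetrahedron, which lies in $\ST$; your case analysis survives this only because both cases for $y$ then become vacuous, and that deserves a sentence.

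The genuine gap is on the lower\-/bound side. The inequality $4\pi-2.5678\times 10^{-1}\leq \wt{\area_{\max}}$ requires exhibiting a concrete $G_0\in\PCS\setminus\ST$ and computing its critical area, and you never do: ``replace a few faces of a spherical tiling and tune the construction so that the sum hits the target value'' asserts the conclusion rather than establishing it, and it is not even clear that such a surgery produces a planar tessellation with positive combinatorial curvature at every vertex, let alone one whose critical area equals the stated constant. The paper's witness needs no perturbation at all: it is the Johnson solid $J16$, which already fails to be in $\ST$. It has $10$ vertices of pattern $(3,3,4,4)$ and $2$ of pattern $(3,3,3,3,3)$, so $a_c(J16)=a_c(3,3,4,4)=1.0472\cdots<a_c(3,3,3,3,3)=1.1071\cdots$, and summing the areas of its $10$ triangles and $5$ squares at that side length gives $\area^{\cri}(J16)=4\pi-2.5678\times 10^{-1}$. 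Until you name such a graph and carry out this computation, the left\-/hand inequality of the theorem is unproven.
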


The lower bound estimate follows from the calculation of critical area
of Johnson solid, $J16.$ For the upper bound estimate, one observes that
only local arguments as in the proof of Theorem~\ref{thm:minarea} are
insufficient. Our strategy is to reformulate the problem to a new one
which can be estimated by local arguments.  One is ready to see that
$$\wt{\area_{\max}}=4\pi-\epsilon_{\mathrm{gap}},$$ where
$$\epsilon_{\mathrm{gap}}:=\inf_{G\in \PCS\setminus \ST}
(4\pi-\area^{\cri}(G)),$$ which is called \emph{the gap} between the
maximal critical area $4\pi$ and other critical areas. Hence the above
theorem is equivalent to the gap estimate, $$1.6471\times {10^{-5}}\leq
\epsilon_{\mathrm{gap}} \leq 2.5678\times {10^{-1}}.$$ By the
Gauss-Bonnet theorem, \eqref{GaussBonnet}, we have
\begin{align}
 \epsilon_{\mathrm{gap}}=\inf_{G\in \PCS\setminus \ST} \sum_{\mbox{$x$
 is a vertex of $G$}}K_{a_c(G)}(x).\label{total angle defect}
\end{align}
Hence for the upper bound
estimate, it suffices to obtain the lower bound estimate of total angle
defect for these planar graphs. This new problem fits to local
arguments, and we prove the results by enumerating all cases.

The following is a corollary of Theorem~\ref{thm:gap}.
\begin{corollary}  $$G\in\PCS\setminus\ST,\ 0<a\leq a_c(G) \implies \area_a(G)\leq 4\pi-1.6471\times {10^{-5}}.$$
\end{corollary}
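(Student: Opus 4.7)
The plan is to obtain this corollary as a one-step consequence of Theorem~\ref{thm:gap} together with the monotonicity of the area function $a\mapsto \area_a(G)$. There is no real obstacle here; the only content is to connect the pointwise statement (bounding $\area_a(G)$ for a variable $a\in(0,a_c(G)]$) with the supremum statement in Theorem~\ref{thm:gap} (which bounds the critical area $\area^{\cri}(G)$).

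First, I would invoke the monotonicity remark made right after Definition~\ref{def:critical side-length}: for any finite planar graph $G$, the map $a\mapsto \area_a(G)$ is monotonely increasing, and consequently
\[
\area^{\cri}(G)=\area_{a_c(G)}(G)=\max\{\area_a(G):S_a(G)\in \Al\}.
\]
Hence for any $a$ with $0<a\leq a_c(G)$, one has $S_a(G)\in \Al$ and
\[
\area_a(G)\leq \area_{a_c(G)}(G)=\area^{\cri}(G).
\]

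Next, since $G\in\PCS\setminus\ST$ by assumption, the definition of $\wt{\area_{\max}}$ from Problem~\ref{prob:minarea} yields $\area^{\cri}(G)\leq \wt{\area_{\max}}$. Applying the upper bound from Theorem~\ref{thm:gap}, namely $\wt{\area_{\max}}\leq 4\pi-1.6471\times 10^{-5}$, I would chain these inequalities:
\[
\area_a(G)\leq \area^{\cri}(G)\leq \wt{\area_{\max}}\leq 4\pi-1.6471\times 10^{-5}.
\]
This is the desired conclusion. The work behind the constant $1.6471\times 10^{-5}$ is entirely packaged inside Theorem~\ref{thm:gap} via the reformulation $\epsilon_{\mathrm{gap}}=\inf \sum_x K_{a_c(G)}(x)$ in \eqref{total angle defect}; the corollary itself requires no further case analysis.
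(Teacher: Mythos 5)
Your proposal is correct and matches the intended derivation: the paper states this as an immediate consequence of Theorem~\ref{thm:gap}, and the chain $\area_a(G)\leq \area^{\cri}(G)\leq \wt{\area_{\max}}\leq 4\pi-1.6471\times 10^{-5}$, using the monotonicity of $a\mapsto\area_a(G)$ noted after Definition~\ref{def:critical side-length}, is exactly the argument the paper leaves implicit.
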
 


\section{Preliminaries}
Let $G=(V,E,F)$ be a finite planar graph. Two vertices are called
\emph{neighbors} if there is an edge connecting them. We denote by $\deg(x)$
the degree of a vertex $x,$ i.e. the number of neighbors of a vertex
$x,$ and by $\deg(\sigma)$ the degree of a face $\sigma,$ i.e. the
number of edges incident to a face $\sigma$ (equivalently, the number of
vertices incident to $\sigma$). 


For any spherical regular $n$-gon of side length $a<\frac{2\pi}{n},$
$\Delta_n(a)$, contained in a hemisphere of $\SP^2(1),$ we denote by $\beta=\beta_{n,a}$ the
interior angle at corners of the $n$-gon. By the spherical cosine law
for angles~\cite[p.~65]{MR1254932},
\begin{equation*}\cos\frac{a}{2}\sin\frac{\beta}{2}=\cos\frac{\pi}{n}.
\end{equation*} One is ready to see that $\beta_{n,a}$ is monotonely increasing in $a$ and
\begin{equation}\label{eq:theta}\lim_{a\to 0}\beta_{n,a}=\frac{n-2}{n}\pi,\end{equation} where the right hand side is the interior angle of a regular $n$-gon in the plane.
The area of $\Delta_n(a)$ is given by 
$$\area(\Delta_n(a))=n\beta-(n-2)\pi.$$

We are ready to prove the Gauss-Bonnet theorem for spherical polyhedral surfaces.
\begin{proof}[Proof of Theorem~\ref{thm:GaussBonnet}] For any face $\sigma\in F,$ the area of $\sigma$ in $S_a(G)$ is given by
$$\deg(\sigma)\beta_{\deg(\sigma),a}-(\deg(\sigma)-2)\pi.$$ Hence by the counting argument,
\begin{eqnarray*}\area_a(G)&=&\sum_{\sigma\in F}\deg(\sigma)\beta_{\deg(\sigma),a}-\sum_{\sigma\in F}(\deg(\sigma)-2)\pi\\
&=&\sum_{x\in V}\theta_a(x)-\pi\sum_{\sigma\in F}\deg(\sigma)+2\pi\sharp F\\
&=&-\sum_{x\in V}K_a(x)+2\pi(\sharp V-\sharp E+\sharp F)=-\sum_{x\in V}K_a(x)+4\pi.
\end{eqnarray*} This proves Theorem~\ref{thm:GaussBonnet}. 
\end{proof}

\begin{proposition}\label{prop:pcspc} $\PCS=\PC.$
\end{proposition}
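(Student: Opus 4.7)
The plan is to translate the inequality $\Phi(x) > 0$ into a statement about the total angle at $x$ on the Euclidean polyhedral surface, and then compare the spherical total angle $\theta_a(x)$ to it using monotonicity and a small-$a$ limit argument.

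\textbf{Step 1: Reinterpret $\Phi(x)$ as a Euclidean angle defect.} Let $\theta^E(x)$ denote the total angle at $x$ in the Euclidean polyhedral surface $S(G)$, where each face of degree $n$ contributes the interior angle $(n-2)\pi/n$. A direct rearrangement of the definition \eqref{def:comb} gives
\begin{equation*}
\theta^E(x) = \sum_{\sigma \ni x} \frac{(\deg(\sigma)-2)\pi}{\deg(\sigma)} = \pi\,\deg(x) - 2\pi\sum_{\sigma\ni x}\frac{1}{\deg(\sigma)} = 2\pi - 2\pi\,\Phi(x).
\end{equation*}
Hence $\Phi(x) > 0$ if and only if $\theta^E(x) < 2\pi$.

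\textbf{Step 2: Compare spherical and Euclidean angles.} From the cosine-law identity $\cos(a/2)\sin(\beta_{n,a}/2) = \cos(\pi/n)$ one reads off that $\beta_{n,a}$ is strictly increasing in $a$ on $(0, 2\pi/n]$ and that $\beta_{n,a}\to (n-2)\pi/n$ as $a\to 0^+$, as already noted in \eqref{eq:theta}. Summing over the faces incident to $x$, this yields, for every $a\in (0,\inf_\sigma 2\pi/\deg(\sigma)]$,
\begin{equation*}
\theta_a(x) > \theta^E(x) \quad \text{and} \quad \lim_{a\to 0^+}\theta_a(x) = \theta^E(x).
\end{equation*}

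\textbf{Step 3: Deduce both inclusions.} For $\PCS \subseteq \PC$, suppose some $a > 0$ makes $S_a(G) \in \Al$, i.e.\ $\theta_a(x) \leq 2\pi$ for every $x$. Combined with Step~2, $\theta^E(x) < \theta_a(x) \leq 2\pi$, hence $\Phi(x) > 0$ by Step~1. For $\PC \subseteq \PCS$, assume $\Phi(x) > 0$ at every vertex. By Theorem~\ref{thm:208} the graph $G$ is finite, so $V$ and $F$ are finite sets and we set
\begin{equation*}
\varepsilon := \min_{x\in V}\bigl(2\pi - \theta^E(x)\bigr) > 0.
\end{equation*}
Using the continuity of $a\mapsto\theta_a(x)$ and the limit in Step~2, pick $a_0 > 0$ small enough that $a_0 \leq \inf_\sigma 2\pi/\deg(\sigma)$ and $\theta_{a_0}(x) - \theta^E(x) \leq \varepsilon/2$ uniformly in $x\in V$; then $\theta_{a_0}(x) \leq 2\pi - \varepsilon/2 < 2\pi$ for every $x$, so $S_{a_0}(G)\in\Al$, proving $G\in\PCS$.

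The only subtle point is the uniform choice of $a_0$ in Step~3, which requires finiteness of $V$; otherwise one could only achieve a pointwise small $a$ at each vertex. Invoking Theorem~\ref{thm:208} at this stage removes the issue. No deep geometry enters beyond the monotonicity and limit of $\beta_{n,a}$, which are already recorded in the paper.
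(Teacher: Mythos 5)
Your proof is correct and follows essentially the same route as the paper's: both inclusions rest on comparing $\theta_a(x)$ with the Euclidean total angle via the monotonicity and $a\to 0^+$ limit of $\beta_{n,a}$, together with finiteness of $G$ to choose a uniform small side length. You merely make explicit two points the paper leaves implicit, namely the identity $\theta^E(x)=2\pi-2\pi\Phi(x)$ and the uniform choice of $a_0$ over the finite vertex set.
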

\begin{proof} For any $G\in \PCS,$ there is $a>0$ such that $S_a(G)\in \Al.$ For any face $\sigma\in F$ with $\deg(\sigma)=n,$ we know that the interior angle of $\Delta_n(a)$ is greater than that of {an} Euclidean $n$-gon. Consider the Euclidean polyhedral surface $S(G).$ Note that the total angle at each vertex in $S(G)$ is less than that in $S_a(G).$ This yields that $G\in \PC.$ This proves that $\PCS\subset \PC.$

For the other direction, let $G\in \PC.$ Note that by \eqref{eq:theta}, for each vertex $x\in V,$ there is a small constant $a(x)$ such that the total angle $\theta_{a(x)}(x)<2\pi.$ Since the graph is finite, we can choose a small constant $a$ such that
 $$S_a(G)\in \Al,$$ which proves that $G\in \PCS.$
\end{proof}

For a vertex $x$ of the pattern $(f_1,f_2,\cdots,f_N),$ with $N=\deg(x)$ and $\{f_i\}_{i=1}^N$ are the degrees of faces incident to $x.$ Then for any $0<a\leq \frac{2\pi}{f_N},$ the total angle at the vertex $x$ in $S_a(G)$ for some graph $G$ is given by
\begin{equation}\label{eq:ta1}\theta_a(x)=\sum_{i=1}^N2\arcsin \frac{\cos{\frac{\pi}{f_i}}}{\cos{\frac{a}{2}}}.\end{equation}
To determine the critical side-length of the vertex, we have two cases.
If $\theta_{\frac{2\pi}{f_N}}(x)<2\pi,$ then $a_c(x)=\frac{2\pi}{f_N}.$
Otherwise, $a_c(x)$ is the unique solution to the following equation
\begin{equation}\label{eq:ta2}\theta_a(x)=2\pi.\end{equation}

Now we prove Proposition~\ref{prop:sphere}.
\begin{proof}[Proof of Proposition~\ref{prop:sphere}] $(1)\Longrightarrow (2):$ This is trivial.

$(2)\Longrightarrow (3):$ This follows from the monotonicity of $\theta_a(x)$ in $a$ for any $x\in V.$

$(3)\Longrightarrow (4):$ Since $S_{a_c(G)}(G)$ has the curvature at least $1$ and is smooth at each vertex, hence it is locally isometric to a domain in $\SP^2(1).$ This implies that $S_{a_c(G)}(G)$ is isometric to $\SP^2(1).$ Hence $\area^{\cri}(G)=4\pi.$

$(4)\Longrightarrow (1):$ We know that $S_{a_c(G)}(G)$ has the curvature at least $1$ and $\area^{\cri}(G)=4\pi.$ Hence the rigidity of Bishop-Gromov volume comparison for Alexandrov surfaces with the curvature at least $1$ yields that $S_{a_c(G)}(G)$ is isometric to $\SP^2(1),$ see e.g. \cite[Exercise~10.6.12]{MR1835418}.
\end{proof}

Next, we give the proof of Theorem~\ref{thm:classification}.
\begin{proof}[Proof of Theorem~\ref{thm:classification}]
For any $G$ admitting a spherical tiling with regular spherical polygons, $S_{a_c(G)}(G)$ is the spherical tiling. Consider the convex hull $A$ of the vertex set $V$ in $S_{a_c(G)}(G)$ in $\R^3.$ We obtain a convex polyhedron $A$ in $\R^3$ such that all faces are regular Euclidean polygons. The classification of such polyhedra in $\R^3$ was obtained by \cite{MR0185507,Zalgaller69,MR0227860}. Any convex polyhedron in $\R^3$ such that all faces are regular Euclidean polygons is one of the following:
\begin{enumerate}[(a)]
\item the {5} Platonic solids,
\item the 13 Archimedean solids, 
\item the infinite series of prisms and antiprisms, and
\item the 92 Johnson solids.
\end{enumerate}
It is easy to see that the examples in $(a),(b),(c)$ admit spherical
 tilings with regular spherical polygons.
 For example, the antiprism of $2n$~($n>3$) vertices admits a spherical tilings
 with $2n$ regular triangles and $2$ regular $n$-gons having the vertices $P_i$
 $(i=0,1,\ldots,2n-1)$ such that (1) the distance between $P_i$ and the north pole is $\pi /2+ \left( -1 \right) ^{i}\arctan \left( \frac{1}{2}\,\sqrt {2-4\, 
\cos^2  {\frac {\pi }{n}}   +2\,\cos {
\frac {\pi }{n}}  } \right) 
$, (2) the longitude of $P_i$ is $\pi i/n$, and (3) $P_i$ is adjacent to
 $P_{i+1\bmod 2n}$ and to $P_{i+2\bmod 2n}$. The side-length for $n=5$
 is indeed the side-length $\arctan2$ of the regular spherical
 icosahedron on the unit sphere. These are all computed by a mathematics
 software \texttt{Maple}.
To complete the classification of spherical tilings with regular
 spherical polygons $\ST,$ it suffices to check Johnson solids case by
 case using the property $(3)$ in Proposition~\ref{prop:sphere}{, see Table~\ref{tbl:circu} for some part of results.}
 \begin{table}[ht]\centering
   \begin{tabular}{|l|l|l|}
     \hline
    Johnson Solid & the vertex patterns & $a_c$\\
    \hline
   $J1$  & $(3,3,3,3),(3,3,4) $  & $\pi/2$ \\
   $J3$  & $(3,3,4,4),(3,4,6)$   & $\pi/3$ \\
    $J6$  & $(3,3,5,5), (3,5,10)$ & $\pi/5$ \\
    $J11$ & $(3,3,3,3,3), (3,3,3,5)$  & $\arctan(2)$ \\ 
$J19$ & $(3, 4, 4, 4),(4, 4, 8)$ & $2\arccos \frac{\sqrt {12-2\sqrt2} \left( 1+3\sqrt{2}\right)}{17}$ \\
    $J62$ & $\begin{array}{l}
(3,3,3,3,3), (3,3,3,5),     \\ (3,5,5)
	     \end{array}$  & $\arctan(2)$ \\
   ${\begin{array}{l}Jn\\ (76\le n \le 83)\end{array}}$ & $(3,4,4,5), (4,5,10) $
	&$2\arccos{\frac {\sqrt {75-10\sqrt {5}}\sqrt {2} \left(
	    2\sqrt {5}+15 \right) }{205}}$\\
    \hline
   \end{tabular}
  \caption{The critical side-length $a_c$ of the Johnson solids having a circumsphere containing all
  vertices and more than two vertex
  patterns. See Theorem~\ref{thm:classification}
  and  Remark~\ref{rem}. \label{tbl:circu}}
 \end{table}
This proves Theorem~\ref{thm:classification}.\end{proof}

By Theorem~\ref{thm:208}~\eqref{thm:41upper}, for any graph in $\PC$ the
maximal degree of faces is at most $41.$ Motivated by this result, we
say a vertex pattern $(f_1,f_2,\cdots,f_{N})$ is \emph{admissible} if it
has positive combinatorial curvature and $f_N\leq 41.$ We denote by
$\ad$ the set of admissible vertex patterns. The set $\ad$ consists of
the following 342 tuples, derived from the list of vertex patterns with positive combinatorial curvature~\cite{MR2299456,MR2410938}.
\begin{table}[ht]
\begin{center}
\begin{tabular}{ |l|l|l| }
  \hline
  $(3,3,k),$  $3\leq k\leq 41$ &$(3,10,k),$ $10\leq k\leq 14$&$(5,6,k),$ \ \ \ \ \ \ $6\leq k\leq 7$\\
 $(3,4,k),$   $4\leq k\leq 41$ & $(3,11,k),$ $11\leq k\leq 13$& $(3,3,3,k),$ \ \ \ $3\leq k\leq 41$\\
   $(3,5,k),$  $5\leq k\leq 41$ & $(4,4,k),$\ \ \ $4\leq k\leq 41$& $(3,3,4,k),$ \ \ \ $4\leq k\leq 11$\\
 $(3,6,k),$ $6\leq k\leq 41$ & $(4,5,k),$\ \ \  $5\leq k\leq 19$& $(3,3,5,k),$ \ \ \ $5\leq k\leq 7$\\
  $(3,7,k),$ $7\leq k\leq41$ &$(4,6,k),$\ \ \  $6\leq k\leq 11$& $(3,4,4,k),$  \ \ \  $4\leq k\leq 5$\\
  $(3,8,k),$ $8\leq k\leq 23$ &$(4,7,k),$\ \ \  $7\leq k\leq 9$& $(3,3,3,3,k),$ $3\leq k\leq5$\\
  $(3,9,k),$ $9\leq k\leq 17$ & $(5,5,k),$\ \ \  $5\leq k\leq 9$& \\

  \hline
\end{tabular}
\end{center}
\caption{{Admissible patterns $\ad$.}\label{tbl:ad}} 
\end{table}
Table~\ref{tbl:ad}  is obtained from \cite[Table~1]{MR929249} under the restriction \cite[Table~3]{MR929249},
by ignoring the order of numbers in each tuple.

We can calculate critical side-length for all admissible vertex patterns
by using the formulas \eqref{eq:ta1} and \eqref{eq:ta2}. {One is ready to prove the following:} 
 { \begin{proposition}\label{prop:maxc}
   \begin{align*}
\max_{p\in \ad}a_c(p)=\arccos\frac{-1}{3}.\end{align*} The maximum is attained
  only at the pattern $p=(3,3,3).$
 \end{proposition}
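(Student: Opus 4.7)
The plan is to split the admissible patterns $\ad$ into two groups according to whether the largest face-degree $f_N$ equals $3$, and then handle each group by a short, direct calculation.

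First, I compute $a_c(3,3,3)$ explicitly. Since three triangles always fit (the cap $2\pi/f_N=2\pi/3$ is not binding: at $a=2\pi/3$ the interior angle already equals $\pi$, giving total angle $3\pi>2\pi$), the critical side-length is determined by equation \eqref{eq:ta2}, i.e. $3\beta_{3,a}=2\pi$, so $\beta_{3,a_c}=2\pi/3$. Substituting into the spherical cosine law $\cos(a/2)\sin(\beta/2)=\cos(\pi/n)$ with $n=3$ gives $\cos(a_c/2)\sin(\pi/3)=\cos(\pi/3)=1/2$, hence $\cos(a_c/2)=1/\sqrt{3}$, and the double-angle formula yields $\cos a_c=2/3-1=-1/3$, so $a_c(3,3,3)=\arccos(-1/3)$.

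Next, for any pattern $p=(f_1,\dots,f_N)\in\ad$ with $f_N\ge 4$, Definition~\ref{def:critical side-length} forces $a_c(p)\le 2\pi/f_N\le \pi/2$. Since $\arccos(-1/3)>\pi/2$ (because $-1/3<0=\cos(\pi/2)$), every such pattern satisfies $a_c(p)<\arccos(-1/3)$ strictly. This reduces the problem to the case $f_N=3$, i.e.\ $f_1=\cdots=f_N=3$. For such a pattern, positive combinatorial curvature $1-N/2+N/3>0$ forces $N<6$, so the only candidates besides $(3,3,3)$ are $(3,3,3,3)$ and $(3,3,3,3,3)$. In each case the cap $2\pi/3$ is non-binding, and \eqref{eq:ta2} reads $N\beta_{3,a_c}=2\pi$, giving $\cos(a_c/2)=\tfrac{1}{2\sin(\pi/N)}$. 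For $N=4$ this is $1/\sqrt{2}$, so $a_c=\pi/2$; for $N=5$ this is $1/(2\sin(\pi/5))\approx 0.8507>1/\sqrt{3}\approx 0.5774$, so $a_c=\arctan 2<\arccos(-1/3)$. In both cases the value is strictly less than $\arccos(-1/3)$.

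There is no real obstacle here: the key observation driving the whole argument is the cap $a\le 2\pi/f_N$ combined with $\arccos(-1/3)>\pi/2$, which lets one dispose of all but three patterns without touching the implicit equation \eqref{eq:ta2}. The only mildly non-trivial step is confirming that for $(3,3,3)$ itself the cap is not active, which is immediate from $\beta_{3,2\pi/3}=\pi$. Combining the three cases shows $\arccos(-1/3)$ is the unique maximum, attained only at $(3,3,3)$.
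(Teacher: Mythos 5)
Your proof is correct, but it takes a genuinely different route from the paper. The paper's proof is a one-liner resting on machinery it develops in the appendix: the partial order $\le_{emb}$ on patterns (where $p\le_{emb}q$ means $p$ embeds coordinatewise into $q$) together with Lemma~\ref{lem:critical side-length monotone}, which says $p<_{emb}q\implies a_c(p)>a_c(q)$; since $(3,3,3)$ is the unique least element of $\ad$ under $\le_{emb}$, it is the unique maximizer of $a_c$. Your argument instead exploits the hard cap $a_c(p)\le 2\pi/f_N$ built into Definition~\ref{def:critical side-length}: since $\arccos(-1/3)>\pi/2=2\pi/4$, every pattern with $f_N\ge 4$ is eliminated in one stroke, and positive curvature leaves only $(3,3,3)$, $(3,3,3,3)$, $(3,3,3,3,3)$ to check by hand, which you do correctly (these are the tetrahedron, octahedron and icosahedron side-lengths). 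What each approach buys: the paper's argument is uniform, needs no case analysis, and reuses lemmas it needs elsewhere anyway (e.g.\ in Lemma~\ref{lem:N}), but as written it only establishes \emph{where} the maximum is attained and leaves the value $\arccos(-1/3)$ to a side remark; your argument is self-contained and elementary, avoids the $\le_{emb}$ formalism entirely, and explicitly produces the value via the spherical cosine law, at the cost of a small explicit enumeration. Both your cap comparison $\arccos(-1/3)>\pi/2$ and your verification that the cap $2\pi/3$ is non-binding for the all-triangle patterns are sound.
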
}
 In fact, $\arccos(-1/3)$ is the side-length of the spherical tiling by
 4 congruent regular triangles.

\section{Proof of Theorem~\ref{thm:minarea}}
In this section, we consider the estimates for the minimal critical area in the class $\PCS$ and prove Theorem~\ref{thm:minarea}.

For the upper bound estimate of the minimal critical area in the class
$\PCS,$ we consider an example in the class which has small critical
area, see Figure~\ref{example-min}. This example was initially
constructed by Ghidelli~\cite{Gh17} to show that there exists a graph
{$\Gamma\in\PC$} having a vertex of pattern $(3,7,41)$. Actually, 
the set of vertices in $\Gamma$ consists of 2 vertices $u$ of pattern
$(3,5,7)$, 6 vertices $v$ of pattern $(3,3,5,5)$, 53 vertices $w$ of pattern
$(3,3,5,7)$, 8 vertices $x$ of pattern $(3,5,41)$, 11 vertices $y$ of pattern
$(3,3,3,41)$, and 22 vertices $z$ of pattern $(3,7,41)$.
 By the numerical computation,
 \begin{align*}
a_c(\Gamma)
  =& \min (a_c(u),a_c(v),  a_c(w), a_c(x), a_c(y),  a_c(z))\\
=&\min(.86961\cdots,\ \pi/5,\ .22634\cdots,\
2\pi/41, \ .15291\cdots,\
  .030382\cdots)\\
 =&a_c(z)= .030382\cdots=:a_0.
  \end{align*}
The set of faces in $\Gamma$ consists of 61 triangles, 15 pentagons, 11
heptagons and a 41-gon. Hence
\begin{eqnarray*}\area^{\cri}(\Gamma)&=&61\area(\Delta_3(a_0))+15\area(\Delta_5(a_0))\\&&+11\area(\Delta_7(a_0))+\area(\Delta_{41}(a_0))\\&=&2.0961\times
10^{-1}.\end{eqnarray*}
\begin{figure}[htbp]
 \begin{center}
   \begin{tikzpicture}
    \node at (0,0){\includegraphics[width=0.84\linewidth]{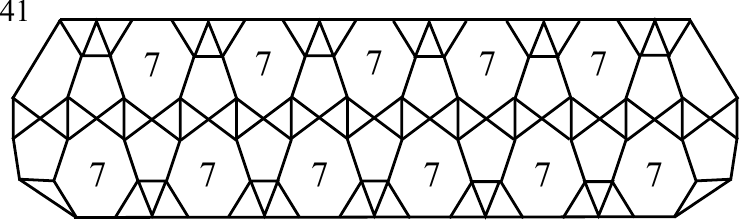}};
    \node at (-4.7, -.8){\Large $u$};
    \node at (-4.4,   .7){\Large $v$};
    \node at (-3.5,   .8){\Large $w$};
    \node at (-4.4,  1.5){\Large $x$};
    \node at (-3.9,  1.5){\Large $y$};
    \node at (-3.4,  1.5){\Large $z$};
   \end{tikzpicture}
  \caption{\small A planar graph $\Gamma\in\PCS$ has small critical area.
  $\pt(u)=(3,5,7)$,    $\pt(v)=(3,5,5,5)$,  $\pt(w)=(3,3,5,7)$, $\pt(x)=(3,5,41)$, $\pt(y)=(3,3,3,41)$, and $\pt(z)=(3,7,41)$.}
\label{example-min}
 \end{center}
\end{figure} 

For the lower bound estimate of the minimal critical area, we shall
estimate the area from below for any graph $G$ in the class $\PCS.$ For
the graph $G$, let $x_0$ be a vertex such that $a_c(x_0)=a_c(G).$ It is
obvious that $$\area^{\cri}(G)\geq
\sum_{x_0\in\overline{\sigma_i}}\area_{a_c(x_0)}(\sigma_i).$$ That is, we give
the lower bound estimate via the summation of areas of faces incident to $x_0.$
 So that
\begin{equation}\label{eq:min1}\area_{\mathrm{min}}\geq \min_{\pt(x)\in
\ad}\sum_{x\in\overline{\sigma_i}}\area_{a_c(x)}(\sigma_i).\end{equation} By the
numerical computation, we enumerate all critical side-lengths for
admissible vertex patterns. For each admissible vertex pattern, we
calculate the summation of areas of faces in the pattern using the
critical side-length of this pattern, as in \eqref{eq:min1}. Then the
minimum of calculated results gives the lower bound of the minimal
critical area, $8.3755\times 10^{-2},$ see the Appendix for the
algorithms~(Section~\ref{appendix:1.9}).

\section{Proof of Theorem~\ref{thm:gap}}
In this section, we estimate the first gap $\epsilon_{\mathrm{gap}}$ and prove Theorem~\ref{thm:gap}.

For the lower bound estimate of Theorem~\ref{thm:gap}, we calculate the
critical area of Johnson solid, $J16.$  The set of vertices in $J16$
consists of 10 vertices $x$ of pattern $(3,3,4,4)$ and 2 vertices $y$ of
pattern $(3,3,3,3,3).$ By the numerical computation,
\begin{eqnarray*}a_c(J16)=\min(a_c(x),\ a_c(y))=\min(1.0472,\ 1.1071)
=a_c(x)=1.0472=:a_0.\end{eqnarray*} Since the set of faces in $J16$ contains $10$ triangles and $5$ squares, 
$$\area^{\cri}(J16)=10\area(\Delta_3(a_0))+5\area(\Delta_4(a_0))=4\pi-2.5678\times {10^{-1}}.$$
Actually, $\area^{\cri}(J16)\ge\area^{\cri}(J)$ for any Johnson solid
$J$, according to a numerical computation.

For the upper bound estimate of Theorem~\ref{thm:gap}, we are based on
 \eqref{total angle defect} and the finiteness of $\PC\setminus\ST$.
 We estimate total
 angle defect $\sum_{x\in G}K_{a_c(G)}(x)$ from below, for any $G=(V,E,F)\in\PC\setminus\ST$.
 For the graph $G$, let $x_0$ be a vertex such that $a_c(x_0)=a_c(G),$ i.e. for any $y\in V,$ $a_c(y)\geq a_c(x_0).$ For simplicity, we denote by $a_0:=a_c(G).$ By Proposition~\ref{prop:maxc}, $\pt(x_0)\neq (3,3,3)$ since there are at least two different critical side-lengths for vertices in $G$ by $(3)$ in Proposition~\ref{prop:sphere}.
Now we have two cases: $K_{a_0}(x_0)=0$ and $K_{a_0}(x_0)>0$.

Consider the case $K_{a_0}(x_0)=0.$
 Since $\area^{\cri}(G)<4\pi,$ by the Gauss-Bonnet theorem
 \eqref{GaussBonnet}, $\sum_{x\in V}K_{a_0}(x)>0$.
 Hence, for some vertex $y\in V$,
 \begin{align*}K_{a_0}(y)>0,\quad
 \mathrm{and}\ \ a_c(y)\geq a_0.\end{align*}
Therefore, we obtain a positive lower bound of $\sum_{x\in V}K_{a_0}(x)$:
\begin{align}
\begin{cases}K_{a_0}(x_0),& (K_{a_0}(x_0)>0);\\
\min\left\{K_{a_0}(y)>0\ \colon\ y\in V, \ a_c(y)\geq a_c(x_0) \right\},&(K_{a_0}(x_0)=0).\end{cases}\label{ineq:pos}\end{align}

By replacing vertices $x,y$ by vertex patterns $p,q$ in the lower
bound~\eqref{ineq:pos} and then minimizing it over
$p\in\ad\setminus\{(3,3,3)\}$, we obtain the lower bound
$1.6471\times {10^{-5}}$ given in Theorem~\ref{thm:gap}.
 See the Appendix for the
algorithm~(Section~\ref{appendix:1.10}).

\section{Appendix}

We prove Proposition~\ref{prop:maxc} in Subsection~\ref{subsec:propmaxc}, and present
symbolic computations and numerical computations in the proof of
Theorem~\ref{thm:minarea} and
Theorem~\ref{thm:gap}.
The computations are carried out by a mathematics software \texttt{Maple}.

\subsection{Proof of  Proposition~\ref{prop:maxc}}\label{subsec:propmaxc}

  For all $p=(f_1, f_2, \ldots, f_n)\in\ad$ and $q=(g_1, g_2, \ldots,
  g_m)\in\ad$, we write  $p \le_{emb} q$ if there are
  positive integers  $1\le j_1<j_2<\cdots<j_n\le m$ such that $f_i\le g_{j_i}$
  $(1\le i\le n)$. In this case, $f_n\le g_m$, because $f_i \le f_j$ and $g_i
  \le g_j$ for  $i<j$.
  The binary relation $\le_{emb}$ is a partial
  ordering on $\ad$. {Let $<_{emb}$ be the strict part $\le_{emb}\setminus =$.}

 \begin{lemma}\label{lem:monotonicity} Assume
	 $p=(f_1,\ldots,f_n)\in\ad$ and {$q=(g_1,\ldots, g_m)\in\ad.$}
  \begin{enumerate}
   \item \label{assert:curvature defined} $K_a(p)$ is
	 a strictly decreasing, continuous function of $a$:
	 \begin{align*}a\in [0,\
	 2\pi/f_n]\mapsto K_a(p)=2\pi - \sum_{i=1}^n 2\arcsin\frac{\cos\frac{\pi}{f_i}}{\cos\frac{a}{2}}.\end{align*}

 \label{assert:decreasing in a} 
	 
   \item \label{assert:decreasing in p} If $p<_{emb} q$ and  $a\in
	 [0,\ 2\pi/g_m]$, then $K_a(p)> K_a(q)$.
  \end{enumerate}
 \end{lemma}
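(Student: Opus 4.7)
My plan is to reduce both parts to the elementary strict monotonicity of $\cos$ on $[0,\pi/2]$ and of $\arcsin$ on $[0,1]$, applied to the explicit formula $K_a(p)=2\pi-\sum_{i=1}^n 2\arcsin(\cos(\pi/f_i)/\cos(a/2))$, keeping careful track of where the strict inequalities come from.

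For assertion~(1) I first verify that the formula makes sense on the entire interval $[0,2\pi/f_n]$: since $a/2\le\pi/f_n\le\pi/f_i$ and $\cos$ is strictly decreasing on $[0,\pi/2]$, one has $\cos(a/2)\ge\cos(\pi/f_n)\ge\cos(\pi/f_i)>0$, so each argument of $\arcsin$ lies in $[0,1]$. On $[0,\pi/f_n]\subset[0,\pi/2]$ the map $a\mapsto\cos(a/2)$ is strictly decreasing and continuous, hence $a\mapsto\cos(\pi/f_i)/\cos(a/2)$ is strictly increasing and continuous; post-composing with $\arcsin$ preserves these properties. Summing, multiplying by $2$, and subtracting from $2\pi$ flips monotonicity, yielding the strictly decreasing continuous function claimed.

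For assertion~(2) I fix an embedding $1\le j_1<\cdots<j_n\le m$ witnessing $p\le_{emb} q$, set $J=\{j_1,\dots,j_n\}$, and note that $f_n\le g_{j_n}\le g_m$, so $[0,2\pi/g_m]\subset[0,2\pi/f_n]$ and both $K_a(p)$ and $K_a(q)$ are well-defined. I would then rewrite
\begin{align*}
K_a(p)-K_a(q)&=\sum_{i=1}^n 2\Bigl(\arcsin\tfrac{\cos(\pi/g_{j_i})}{\cos(a/2)}-\arcsin\tfrac{\cos(\pi/f_i)}{\cos(a/2)}\Bigr)\\
&\quad+\sum_{j\notin J} 2\arcsin\tfrac{\cos(\pi/g_j)}{\cos(a/2)}.
\end{align*}
Each matched summand is $\ge 0$ from $f_i\le g_{j_i}$ via the monotonicity of $\cos$ on $(0,\pi/3]$ and of $\arcsin$; each unmatched summand is $>0$ because $g_j\ge 3$ forces $\cos(\pi/g_j)\ge 1/2$.

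To upgrade to strict inequality I split according to why $p\ne q$. If $n<m$, then $J$ is a proper subset of $\{1,\dots,m\}$ and the unmatched sum contributes at least one strictly positive term. If $n=m$, then necessarily $j_i=i$ for all $i$, and $p\ne q$ forces some $f_{i_0}<g_{i_0}$; the strict versions of the monotonicities above then make the $i_0$-th matched term strictly positive. In either case $K_a(p)>K_a(q)$. I do not anticipate any genuine obstacle here, since the core of the argument is simply unwinding monotonicity; the only point requiring care is the strict-versus-weak bookkeeping in the two subcases, which the definition of $<_{emb}$ delivers cleanly.
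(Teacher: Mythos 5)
Your proof is correct and follows the same route as the paper, which after checking that $a/2\le\pi/f_n\le\pi/f_i$ makes each $\arcsin$ well-defined simply declares the monotonicity assertions ``clear.'' You supply exactly the details the paper omits (strict monotonicity of $\cos$ and $\arcsin$, and the matched/unmatched decomposition with the $n<m$ versus $n=m$ case split for strictness), all of which check out.
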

  \begin{proof}
    As $a\in [0,\ 2\pi/f_n]$ and $f_i\ge3$,
we have   $\pi/f_i\ge \pi/f_n\ge a/2$. So, 
   $\arcsin(\cos(\pi/f_i)/\cos(a/2))$ is defined. The other assertions
   are clear.   
  \end{proof}

\begin{lemma}\label{lem:critical side-length}
If $p=(f_1,\ldots,f_n)\in\ad$,
 $a_c(p)$ of Definition~\ref{def:critical side-length} is well-defined.
 Moreover,  if $K_{2\pi/f}(p)> 0$, then $a_c(p)=2\pi/f_n$. If $K_{2\pi/f}(p)\le 0$, then $a_c(p)$ is the unique solution
 $a$ such that $K_a(p)=0$ and $a\in (0,\
  2\pi/f_n{]}$.
\end{lemma}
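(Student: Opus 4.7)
The plan is to reduce everything to the behavior of the single-variable function $a \mapsto K_a(p)$ on the closed interval $[0,\,2\pi/f_n]$, whose strict monotonicity and continuity are supplied by Lemma~\ref{lem:monotonicity}\eqref{assert:curvature defined}. First I will evaluate the endpoint $a=0$ using the identity $2\arcsin(\cos(\pi/f_i)) = \pi - 2\pi/f_i$ to get
\begin{equation*}
K_0(p) \;=\; 2\pi - \sum_{i=1}^n\Bigl(\pi - \tfrac{2\pi}{f_i}\Bigr) \;=\; 2\pi\Bigl(1 - \tfrac{n}{2} + \sum_{i=1}^n \tfrac{1}{f_i}\Bigr) \;=\; 2\pi\,\Phi(p),
\end{equation*}
where $\Phi(p)$ is the combinatorial curvature of the pattern. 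Because $p\in\ad$, we have $\Phi(p)>0$ and hence $K_0(p)>0$; this is the key positive endpoint needed for the intermediate value step.

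Next I will split into the two cases stated in the conclusion, assuming (as context demands, correcting the apparent typo $f$) that the threshold is $a=2\pi/f_n$. In the case $K_{2\pi/f_n}(p)>0$, strict monotonicity forces $K_a(p)>K_{2\pi/f_n}(p)>0$ for every $a\in[0,\,2\pi/f_n]$, so the defining set $\{a\in[0,\,2\pi/f_n]: K_a(p)\geq 0\}$ is the whole interval and its supremum $2\pi/f_n$ is attained; hence $a_c(p)=2\pi/f_n$, and it is well-defined. In the case $K_{2\pi/f_n}(p)\leq 0$, the pair $K_0(p)>0\geq K_{2\pi/f_n}(p)$ together with continuity yields, by the intermediate value theorem, at least one $a^*\in(0,\,2\pi/f_n]$ with $K_{a^*}(p)=0$, and strict monotonicity guarantees uniqueness. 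By the same monotonicity, $K_a(p)\geq 0$ holds precisely for $a\in[0,a^*]$, so again the supremum in the definition is attained and equals $a^*$, giving $a_c(p)=a^*$.

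Putting the two cases together, $a_c(p)$ is well-defined for every $p\in\ad$ and matches the two explicit descriptions in the statement. No genuine obstacle arises here: the whole argument is a one-variable continuity/monotonicity exercise, and the only slightly subtle point I want to flag in the write-up is the initial computation $K_0(p)=2\pi\,\Phi(p)$, because it is precisely the link between admissibility (positive combinatorial curvature) and the nonemptiness of $\{a: K_a(p)\geq 0\}$, which in turn is what makes the definition meaningful.
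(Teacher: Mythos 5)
Your proof is correct and follows essentially the same route as the paper: both rest on the continuity and strict monotonicity of $a\mapsto K_a(p)$ from Lemma~\ref{lem:monotonicity} together with positivity of $K_a(p)$ near $a=0$ coming from admissibility. The only difference is presentational --- you make explicit the endpoint computation $K_0(p)=2\pi\,\Phi(p)>0$ and identify the set $\{a : K_a(p)\ge 0\}$ as a closed interval via the intermediate value theorem, whereas the paper argues more tersely via compactness of that set and the limiting behavior in \eqref{eq:theta}.
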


\begin{proof} By  Lemma~\ref{lem:monotonicity}~\eqref{assert:decreasing
 in a}, the argument $U$ of {the maximum} in \eqref{def:ac} is a compact set. {By the definition of admissible patterns and the limiting behavior in \eqref{eq:theta}, one is ready to see that $a_c(p)=\max U>0.$} The latter part is due to
  Lemma~\ref{lem:monotonicity}~\eqref{assert:decreasing in a}.
\end{proof}

\begin{lemma}\label{lem:critical side-length monotone}
{$p<_{emb} q \implies a_c(p)> a_c(q)$.}
\end{lemma}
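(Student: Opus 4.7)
The plan is to combine the two monotonicity statements of Lemma~\ref{lem:monotonicity} with the characterization of $a_c$ from Lemma~\ref{lem:critical side-length}. Writing $p=(f_1,\dots,f_n)$ and $q=(g_1,\dots,g_m)$, the definition of $\le_{emb}$ forces $f_n\le g_m$, so $2\pi/f_n\ge 2\pi/g_m$ and both curvature functions $a\mapsto K_a(p)$ and $a\mapsto K_a(q)$ are simultaneously defined on the common interval $[0,2\pi/g_m]$. Throughout this interval, Lemma~\ref{lem:monotonicity}\eqref{assert:decreasing in p} yields the strict pointwise inequality $K_a(p)>K_a(q)$.

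I would then split into cases according to where $a_c(q)$ sits inside its domain. If $a_c(q)<2\pi/g_m$, then Lemma~\ref{lem:critical side-length} gives $K_{a_c(q)}(q)=0$, whence $K_{a_c(q)}(p)>0$. Using the strict monotone decrease and continuity of $a\mapsto K_a(p)$ from Lemma~\ref{lem:monotonicity}\eqref{assert:decreasing in a}, the intermediate value theorem forces any root of $K_a(p)=0$ in $(0,2\pi/f_n]$ to lie strictly above $a_c(q)$; if no such root exists, then $a_c(p)=2\pi/f_n\ge 2\pi/g_m>a_c(q)$. In both subcases $a_c(p)>a_c(q)$.

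If instead $a_c(q)=2\pi/g_m$, then $K_{2\pi/g_m}(q)>0$, and by the strict comparison also $K_{2\pi/g_m}(p)>0$. When the strict embedding forces $f_n<g_m$, one has $2\pi/f_n>2\pi/g_m$, and the positivity of $K_a(p)$ on a right-neighborhood of $2\pi/g_m$ together with the strict decrease of $a\mapsto K_a(p)$ immediately gives $a_c(p)>2\pi/g_m=a_c(q)$.

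The main obstacle is the borderline subcase in which $a_c(q)=2\pi/g_m$ together with $f_n=g_m$: here both critical side-lengths are pinned to the same boundary value $2\pi/f_n=2\pi/g_m$, and the preceding argument only delivers the non-strict conclusion $a_c(p)\ge a_c(q)$. Upgrading to strict inequality in this borderline case is the delicate point I would scrutinize most carefully, possibly by exploiting the structural constraints recorded in Table~\ref{tbl:ad} (which constrain when two admissible patterns with equal maximum face-degree can be ordered by $<_{emb}$), or by verifying that the strict embedding must produce a strict drop of the total-angle function at $a=2\pi/g_m$ by an additional amount that pushes $a_c(p)$ past $2\pi/f_n$.
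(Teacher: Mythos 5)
Your decomposition is essentially the paper's: compare $a_c(p)$ and $a_c(q)$ over the common domain $[0,2\pi/g_m]$ via the strict pointwise inequality $K_a(p)>K_a(q)$ from Lemma~\ref{lem:monotonicity}, then enlarge the domain for $p$ using $f_n\le g_m$. Your case analysis is in fact more careful than the paper's, and the borderline subcase you flag ($f_n=g_m$ together with $a_c(q)=2\pi/g_m$) is not a defect of your write-up but of the statement itself: there both critical side-lengths are pinned to the common endpoint $2\pi/f_n=2\pi/g_m$, so only $a_c(p)\ge a_c(q)$ holds, with equality. No appeal to Table~\ref{tbl:ad} can rescue strictness, because this configuration occurs inside $\ad$: by Lemma~\ref{lem:N} both $(3,3,7)$ and $(3,4,7)$ belong to $M$, hence $a_c(3,3,7)=a_c(3,4,7)=2\pi/7$, while $(3,3,7)<_{emb}(3,4,7)$. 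The correct conclusion is $a_c(p)\ge a_c(q)$ in general, with strict inequality whenever $f_n<g_m$ or $a_c(q)<2\pi/g_m$ (equivalently, $K_{2\pi/g_m}(q)\le 0$), exactly as your first two cases show.

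For comparison, the paper's own proof establishes only the non-strict chain of inequalities and then asserts ``Moreover, $a_c(p)>a_c(q)$ by Lemma~\ref{lem:monotonicity}'' without addressing the boundary case, so it contains the same gap you identified. The damage downstream is nil: the lemma is invoked only in Proposition~\ref{prop:maxc} with $p=(3,3,3)$, and since $K_{2\pi/3}(3,3,3)<0$ (one checks $\cos^2\frac{\pi}{3}+\cos^2\frac{\pi}{3}>\cos^2\frac{\pi}{3}$), the value $a_c(3,3,3)$ is an interior root of $K_a(p)=0$; your first case then yields the strict inequality $a_c(3,3,3)>a_c(q)$ for every other $q\in\ad$, which is all the proposition needs. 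So your instinct to isolate and distrust the borderline case was exactly right; the fix is to restate the lemma with the non-strict inequality plus the stated side condition for strictness, rather than to try to prove strictness unconditionally.
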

   \begin{proof} Let $p=(f_1,\ldots,f_n)$ and $q=(g_1,\ldots,g_m)$. Then
$     a_c(p)=\max\{a\in [0,\ 2\pi/f_n]\colon K_a(p)\ge0\}\ge \max\{a\in
    [0,\ 2\pi/g_m]\colon K_a(p)\ge0\}$ by $f_n\le g_m$. This is greater
    than or equal to $\max\{a\in [0,\ 2\pi/g_m]\colon
    K_a(q)\ge0\}=a_c(q)$ by
    Lemma~\ref{lem:monotonicity}~\eqref{assert:decreasing in
    p}. {Moreover, $a_c(p)>a_c(q)$ by Lemma~\ref{lem:monotonicity}.}
  \end{proof}

\begin{proof}[Proof of Proposition~\ref{prop:maxc}] Among $\ad$, the pattern
 $(3,3,3)$ is 
 the least with respect to {the partial ordering} $\le_{emb}$. By  Lemma~\ref{lem:critical
 side-length monotone}, the conclusion follows.
\end{proof}

\subsection{An algorithm to compute the lower bound of $\area_{\mathrm{min}}$ of
Theorem~\ref{thm:minarea}}\label{appendix:1.9}

We will compute
{\begin{align*}
\min_{(f_1,\ldots,f_n)\in\ad}\sum_{i=1}^n \area(\Delta_{f_i}(a_c(p))). 
\end{align*}}
To improve the accuracy of numerical computation of the critical side-length $a_c(p)$
of vertex patterns $p\in\ad$, we enumerate
\begin{align*}
 M:=\{\ (f_1,\ldots,f_n)\in\ad\mid K_{2\pi/f_n}(f_1,\ldots,f_n)> 0 \}.
\end{align*} 

 \begin{lemma} \label{lem:N}$M$ consists of the following 103 patterns:
$(3,3,k)$ $(5\le k\le41)$,  $(3,4,k)$ $(7\le k\le 41)$, and $(3,5,k)$ $(11\le k\le 41)$.
\end{lemma}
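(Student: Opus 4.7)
The plan is to directly evaluate $K_{2\pi/f_n}(p)$ for each admissible pattern $p=(f_1,\ldots,f_n)\in \ad$, exploiting the special fact that at $a=2\pi/f_n$ the regular spherical $f_n$-gon degenerates to a hemisphere, so its interior angle satisfies $\beta_{f_n,2\pi/f_n}=\pi$. Substituting into the formula from Lemma~\ref{lem:monotonicity}\eqref{assert:curvature defined} yields
\[
K_{2\pi/f_n}(p)=2\pi-\#\{i:f_i=f_n\}\cdot\pi-\sum_{i:\,f_i<f_n}2\arcsin\!\frac{\cos(\pi/f_i)}{\cos(\pi/f_n)},
\]
so $p\in M$ already forces exactly one index $i$ to have $f_i=f_n$, and forces the residual sum to be strictly less than $\pi$.

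First I would dispose of all patterns with $n\ge 4$. Using the elementary bounds $2\arcsin(\cos(\pi/f_i)/\cos(\pi/f_n))\ge 2\arcsin(\cos(\pi/f_i))$, which equals $\pi/3$ or $\pi/2$ when $f_i=3$ or $4$ and exceeds $\pi/2$ when $f_i\ge 5$, a case-by-case check of the $n\ge 4$ families in $\ad$, namely $(3,3,3,k)$, $(3,3,4,k)$, $(3,3,5,k)$, $(3,4,4,k)$, and $(3,3,3,3,k)$, shows that the residual sum reaches or exceeds $\pi$ in every case. Hence $K_{2\pi/f_n}(p)\le 0$ and $p\notin M$.

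For the remaining case $n=3$, writing $p=(f_1,f_2,f_3)$ with $f_1\le f_2\le f_3$, membership $p\in M$ amounts to
\[
\arcsin\!\frac{\cos(\pi/f_1)}{\cos(\pi/f_3)}+\arcsin\!\frac{\cos(\pi/f_2)}{\cos(\pi/f_3)}<\frac{\pi}{2}.
\]
Since both arguments lie in $[0,1]$, I would invoke the standard identity $\arcsin u+\arcsin v<\pi/2\iff u^2+v^2<1$ to rewrite this as $\cos^2(\pi/f_1)+\cos^2(\pi/f_2)<\cos^2(\pi/f_3)$, and then enumerate the admissible pairs: $(3,3)$ requires $\cos^2(\pi/f_3)>1/2=\cos^2(\pi/4)$, so $f_3\ge 5$; $(3,4)$ requires $\cos^2(\pi/f_3)>3/4=\cos^2(\pi/6)$, so $f_3\ge 7$; $(3,5)$ requires $\cos^2(\pi/f_3)>(5+\sqrt 5)/8=\cos^2(\pi/10)$, so $f_3\ge 11$; while for every other admissible pair the right-hand side is $\ge 1$, ruling out all $f_3$.

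Combining these ranges with the admissibility cap $f_3\le 41$ produces $37+35+31=103$ patterns, matching the statement. The only slightly non-routine step is to recognize the three closed-form thresholds $\cos^2(\pi/4)$, $\cos^2(\pi/6)$, and $\cos^2(\pi/10)=(5+\sqrt 5)/8$, which makes the critical values of $f_3$ fall exactly on integers and hence makes the count clean; the rest is pure monotonicity and a finite case-check, easily verified numerically as in Section~\ref{appendix:1.9}.
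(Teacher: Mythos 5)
Your proof is correct, and on the length-three patterns it is essentially the paper's argument: your equivalence $\arcsin u+\arcsin v<\pi/2\iff u^2+v^2<1$ (valid here since $u,v\in[0,1]$) is precisely the criterion \eqref{criterion}, which the paper states without derivation, and your three thresholds $\cos^2(\pi/4)$, $\cos^2(\pi/6)$, $\cos^2(\pi/10)=(5+\sqrt5)/8$ are the paper's boundary identities $K_{2\pi/4}(3,3,4)=K_{2\pi/6}(3,4,6)=K_{2\pi/10}(3,5,10)=0$ in disguise. Where you genuinely diverge is in excluding everything else. The paper introduces the auxiliary partial order $\sqsubseteq$, along which $K_{2\pi/f_n}$ is monotone, reduces the complement $\ad\setminus M$ to a handful of $\sqsubseteq$-small patterns, and then must invoke Theorem~\ref{thm:classification} together with Proposition~\ref{prop:sphere} (the $41$-gonal antiprism lies in $\ST$, hence $K_{a_c(p)}(p)=0$ for $p=(3,3,3,41)$) to dispose of the last case. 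You instead kill every pattern of length at least four directly via the bound $2\arcsin\bigl(\cos(\pi/f_i)/\cos(\pi/f_n)\bigr)\ge 2\arcsin\cos(\pi/f_i)$, which equals $\pi/3$ or $\pi/2$ for $f_i=3,4$ and exceeds $\pi/2$ for $f_i\ge5$, so the residual angle sum already reaches $\pi$ in each of the five length-$\ge4$ families (and repeated maximal entries contribute a full $\pi$ each). This is more elementary and more self-contained: it avoids the dependence on the classification theorem (not circular in the paper, but a heavy import for a counting lemma) and, as a bonus, actually proves \eqref{criterion} rather than asserting it. The cost is a slightly longer, but entirely routine, finite case-check. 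Both arguments are complete and yield the same count $37+35+31=103$.
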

  \begin{proof}
For any increasing sequence $(f_1,\ldots,f_n)$ of positive integers,
  \begin{align*}K_{2\pi/f_n}(f_1,\ldots,f_n)=\pi - 2
  \sum_{i=1}^{N-1}\arcsin\frac{\cos(\pi/f_i)}{\cos(\pi/f_n)}.\end{align*}
     Note that for any $(f_1,f_2,f_3)\in\ad$
  \begin{align}
  K_{2\pi/f_3}(f_1,f_2,f_3)\lessgtr 0\iff \cos^2\frac{\pi}{f_1}
   +\cos^2\frac{\pi}{f_2} \gtrless \cos^2\frac{\pi}{f_3}. \label{criterion}
  \end{align}
   
$M$ contains the 103 patterns,  by \eqref{criterion} and
   $K_{2\pi/4}(3,3,4)=K_{2\pi/6}(3,4,6)=K_{2\pi/10}(3,5,10)=0$. We will
   prove that $M$ is contained in the 103 patterns. For two increasing
   sequences $(f_1,\ldots,f_n)$ and $(g_1,\ldots,g_m)$ of positive
   integers, define a partial ordering
  \begin{align*}
   (f_1,\ldots,f_n) \sqsubseteq (g_1,\ldots,g_m):\iff 
   (f_1,\ldots,f_{n-1})\le_{emb} (g_1,\ldots,g_{m-1}), f_n\ge g_m.
\end{align*}
  Then,  
  \begin{align*}
   (f_1,\ldots,f_n) \sqsubseteq (g_1,\ldots,g_m) \implies   K_{2\pi/f_n}(f_1,\ldots,f_n) \ge K_{2\pi/g_m}(g_1,\ldots,g_m).
  \end{align*}
It is sufficient to show $K_{2\pi/f_n}(f_1,\ldots,f_N)\le 0$ for all $\sqsubseteq$-minimal
  $(f_1,\ldots,f_N)\in\ad\setminus M$. 
All such $(f_1,\ldots,f_N)$ are $(3, 6, 41), (4, 4, 41)$, and $(3, 3, 3, 41)$.

   By \eqref{criterion}, $K_{2\pi/41}(3,6,41), K_{2\pi/41}(4,4,41)<0$.
   The vertex pattern of the 41-gonal antiprism is $p=(3,3,3,41)\in\ad$. By
   Theorem~\ref{thm:classification}, the 41-gonal antiprism is in
   $\ST$. By Proposition~\ref{prop:sphere}, $K_{a_c(p)}(p)=0$. By
   Lemma~\ref{lem:monotonicity}~\eqref{assert:decreasing in a} and
   $a_c(p)\le 2\pi/41$, $K_{2\pi/41}(3,3,3,41)\le 0$.
  \end{proof}
For all 239 patterns $p=(f_1,\ldots,f_n)\in\ad\setminus M$, in order to
compute $a_c(p)$, we numerically solve equations $K_a(p)=0$ and $a\in
(0,\ 2\pi/f_n)$, by using a floating-point arithmetic solver
\texttt{fsolve} already built in \texttt{Maple}.  In \texttt{Maple}, the
number of digits carried in float is 10, unless otherwise we change. However, because of numerical error,
\texttt{fsolve} is not unable to solve the equation for $p=(3,5,9)$. {In
this case},
we use a symbolic solver \texttt{solve} built in \texttt{Maple} {, to
solve the equation $K_a(3,5,9)=0$}. {Then} we verify the output of \texttt{solve}, by computing the numerical
value of the output of \texttt{solve}, and plotting a monotone function
$2\pi - \sum_{i=1}^n 2\arcsin\left(\cos(\pi/f_i)/\cos(a/2)\right)$ over
$a\in (0,\ 2\pi/f_n)$.

We should compute the minimum of a finite list of numbers, under
 numerical error. To control the precision of computation of
 \texttt{Maple}, we will compute the second minimum, and the third
 minimum, and so on. These will be computed by a \emph{sorting
 algorithms}, which puts elements of the list in a certain order.
For
 the complexity and mathematical properties of various sorting algorithm, see \cite{MR784432}.

By the computation, 
{\begin{align}
 \min_{(f_1,\ldots,f_n)\in\ad}\sum_{i=1}^n
\area(\Delta_{f_i}(a_c(f_1,\ldots,f_n)))=8.3755\cdots\times 10^{-2} \label{ans}
\end{align}} which is achieved only by
$p=(3,11,13)$. {It is sufficiently distant from}
 the
second minimum $1.2823\cdots\times 10^{-1}$, which is achieved only by
$p=(3,7,41)$. 
 This gives
the lower bound of $\area_\mathrm{min}$ given in
Theorem~\ref{thm:minarea}.
 
\subsection{An algorithm to compute the upper bound of  $\wt{\area_{\mathrm{max}}}$ of  Theorem~\ref{thm:gap}}
\label{appendix:1.10}

Based on the inequality~\eqref{ineq:pos} and the discussion just after
it, we will compute the minimum element in the following finite set
  \begin{align}
   \bigcup_{\substack{ p\in\ad\\ p\ne(3,3,3)}}
\left(\begin{array}{ll}
&\{K_{a_c(p)}(p)\ \colon\  K_{a_c(p)}(p)>0\}\\
  \cup&
\left\{K_{a_c(p)}(q)>0\ \colon\ q\in \ad,  a_c(q)\geq a_c(p),  K_{a_c(p)}(p)=0 \right\}
  \end{array}\right)\label{lb:pos}\end{align}
and the set of $(p,q)$ with $K_{a_c(p)}(q)$ is the minimum.

{To decide the condition $K_{a_c(p)}(p)=0$, we have only to check $p\notin M$,
because of Lemma~\ref{lem:N}. This causes no numerical error. }


{Even if $K_{a_c(p)}(q)=0$, a numerical computation of $K_{a_c(p)}(q)$
 could result in a very small positive, erroneous value.  To circumvent} this, we use the following:
\begin{definition}\label{def:M}
Let  the set $Z$ of 9 2-sets $\{p,q\}$ {consist of $\{(3,4,5), (4,4,4)\}$ and any two vertex patterns of
  Johnson solid $Jn$ for some $n$ with $n=1,3,6,11,19,62$ or $76\le n\le
  83$.} See {Table~\ref{tbl:circu}.}
\end{definition}

\begin{lemma} \label{lem:M}
 For any $p\in \ad\setminus M$ and for any $q\in\ad$, we have
$K_{a_c(p)}(q)=0$, if
  $p=q$ or $\{p,q\}\in Z$.
\end{lemma}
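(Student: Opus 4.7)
The plan is to split on which hypothesis holds. First, if $p = q$, then because $p \in \ad \setminus M$ one has $K_{2\pi/f_n}(p) \le 0$, where $f_n$ is the largest face-degree in $p$, so Lemma~\ref{lem:critical side-length} identifies $a_c(p)$ as the unique zero of $a \mapsto K_a(p)$ on $(0, 2\pi/f_n]$. Hence $K_{a_c(p)}(p) = 0$ immediately.

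Next, suppose $\{p, q\} \in Z$ with $p \neq q$. By Definition~\ref{def:M} there are two sub-cases. If $\{p, q\}$ is a pair of distinct vertex patterns of one of the Johnson solids $J1, J3, J6, J11, J19, J62, J76, \ldots, J83$, then that $Jn$ belongs to $\ST$ by Theorem~\ref{thm:classification}. Applying Proposition~\ref{prop:sphere}(3) to $Jn$ produces a single side-length $a^* = a_c(p) = a_c(q)$ at which every vertex pattern of $Jn$ --- in particular both $p$ and $q$ --- has zero angle defect, so $K_{a_c(p)}(q) = K_{a^*}(q) = 0$.

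The only remaining case is $\{p, q\} = \{(3,4,5), (4,4,4)\}$, which does not correspond to any element of $\ST$ and so must be verified by direct trigonometric computation. From $6\arcsin(\cos(\pi/4)/\cos(a/2)) = 2\pi$ one reads off $\cos(a/2) = \sqrt{2/3}$, i.e.\ $a_c((4,4,4)) = \arccos(1/3) =: a^*$. Substituting $a = a^*$ into \eqref{eq:ta1} for pattern $(3,4,5)$ and using $\arcsin(\sqrt{3}/2) = \pi/3$, the assertion $\theta_{a^*}((3,4,5)) = 2\pi$ reduces to the identity
\[
\arcsin\!\Bigl(\tfrac{\sqrt{6}}{4}\Bigr) + \arcsin\!\Bigl(\tfrac{(1+\sqrt{5})\sqrt{6}}{8}\Bigr) = \tfrac{2\pi}{3},
\]
which after the sine addition formula and squaring becomes the algebraic identity $6(7 - 3\sqrt{5}) = (3\sqrt{3} - \sqrt{15})^2 = 42 - 18\sqrt{5}$. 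Since $(3,4,5) \notin M$, a second application of Lemma~\ref{lem:critical side-length} gives $a_c((3,4,5)) = a^*$, and so $K_{a_c(p)}(q) = 0$ for both orderings of the pair.

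The main obstacle is the last trigonometric identity: it is not predicted by any spherical tiling in $\ST$ and appears to be a genuine algebraic coincidence among the regular spherical triangle, square, and pentagon of common side-length $\arccos(1/3)$. Once this identity has been verified --- symbolically as above, or via \texttt{Maple} --- the remainder of the proof is a routine unwinding of the definitions of $a_c$, $K_a$, and $Z$.
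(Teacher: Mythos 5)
Your proposal is correct and follows essentially the same route as the paper: the $p=q$ case via Lemma~\ref{lem:critical side-length} (the paper cites Lemma~\ref{lem:N} for this), the Johnson-solid pairs via Theorem~\ref{thm:classification} and Proposition~\ref{prop:sphere}(3), and a direct trigonometric verification for $\{(3,4,5),(4,4,4)\}$ at $a^*=\arccos(1/3)=2\arccos(\sqrt{6}/3)$, which is the same identity the paper checks in an equivalent algebraic form.
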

   \begin{proof}
By Lemma~\ref{lem:N}, if $p=q$, then $K_{a_c(p)}(q)=0$.
Consider {the case that $\{p,q\}=\{(3,4,5), (4,4,4)\}\in Z$.}
Note that $a_c(4,4,4)=2\,\arccos \left( \sqrt
{6}/3 \right)$. We claim $K_{a_c(4,4,4)}(3,4,5)=0$. It is because
$$K_{a_c(4,4,4)}(3,4,5)=\frac{4\pi}{3} - 2 \arcsin\frac{\sqrt{6}}{4} -
2\arcsin\frac{\left( \sqrt {5}+1 \right) \sqrt {6}}{8}=0,$$ if and only if $$2/3\,\pi -\arcsin \left( \sqrt {6}/4
\right) =\arcsin \left(  \left( \sqrt {5}+1 \right) \sqrt {6}/8
\right),$$ which is equivalent to $$\sin \left( \pi /3+\arcsin \left(\sqrt
{6}/4 \right) \right) = \left( \sqrt {5}+1 \right) \sqrt {6}/8.$$ So
$K_{a_c(4,4,4)}(3,4,5)=0$. Because $K_a(3,4,5)$ is monotone, we have
$a_c(4,4,4)=a_c(3,4,5)$. So,
    $K_{a_c(3,4,5)}(4,4,4)=K_{a_c(4,4,4)}(4,4,4)=0$.
In the other case,  $K_{a_c(p)}(q)=0$ is by Theorem~\ref{thm:classification}.
\end{proof}

In a numerical computation of the minimum of the finite set~\eqref{lb:pos},
numerical errors of $K_{a_c(p)}$, $a_c(q)$ and $a_c(p)$ matter for deciding the
 two inequalities $K_{a_c(p)}(q)>0$ and $a_c(q)\ge a_c(p)$ in \eqref{lb:pos}.
 So, we will numerically compute
 the triples $(p,q,k)$ in the following set $S(\epsilon)$ such that the third
 entry $k$ is the
 first minimum or the second minimum among all $k'$ with $(p',q',k')\in S(\epsilon)$.
 \begin{align*}S(\epsilon)  &:=\{(p,p,2\pi/41)\colon p=(f_1,f_2,41) \in M\}\\
  &\cup
  \left \{(p,q,K_{a_c(p)}(q)) \colon\begin{array}{l}
    K_{a_c(p)}(q)>\epsilon,\ q\in \ad\setminus\{p\},\
  \{p,q\}\notin Z, \\
a_c(q)\geq\epsilon+ a_c(p),\ p\in \ad\setminus M \setminus\{(3,3,3)\}
\end{array}\right\},\end{align*} {where  $\epsilon$ of the two
inequalities  $K_{a_c(p)}(q)>\epsilon$ and $a_c(q)\ge\epsilon+ a_c(p)$ is any real number. }

{Note that} $$ |S(\epsilon)|\ge(\sharp M)+(\sharp
\ad - 1)\cdot (\sharp\ad  - \sharp
M - 1) - 2\sharp Z =81243.$$

As the ``margin'' $\epsilon\in\R$ of the inequalities decreases,
{the set $S(\epsilon)$ becomes large.}
A numerical computation shows that the smallest third entries of  $S(\pm
  10^{-5})$ are both $1.6471\cdots\times
  10^{-5}$ and {are} achieved only by $K_{a_c(3,7,29)}(3,9,16)$, while the
  second smallest third {entries} of $S(\pm 10^{-5})$ {are both} $1.7919\times
  10^{-5}$ and {are} achieved only by $K_{a_c(4, 4, 28)}(5, 5, 9)$. {The numerical computations are done by  \texttt{Maple}
  with the number of digits carried in float being 10.
  The smallest third entries $1.6471\cdots\times
  10^{-5}$ are sufficiently distant from the second smallest third entries $1.7919\times
  10^{-5}$. 
  So, the minimum of $S(0)=\eqref{lb:pos}$ is $1.6471\cdots\times
  10^{-5}$  and is achieved only by $K_{a_c(3,7,29)}(3,9,16)$.} Hence
  $$\wt{\area_{\max}}\le 4\pi-K_{a_c(3,7,29)}(3,9,16)=4\pi-1.6471\cdots\times 10^{-5}.$$
{This completes the proof of Theorem~\ref{thm:gap}.}

{The critical side-lengths of the argmin vertex patterns $p=(3,7,29), q=(3,9,16)$ are very close:}
   $0.14267\cdots=a_c(3,7,29)<a_c(3,9,16)=0.14269\cdots$ and there is no
$r\in\ad$ such that $a_c(3,7,29)<a_c(r)<a_c(3,9,16)$, according to a
numerical computation. 



\bigskip
{\textbf {Acknowledgements.}}\ We thank Andrei Vesnin for helpful discussions on Milka's work.
Y. A. is supported by JSPS KAKENHI Grant Number JP16K05247.  Y. S. is supported by  NSFC, grant
  no. 11771083 and NSF of Fujian Province through Grants 2017J01556,
  2016J01013.

\bibliographystyle{spmpsci}
\bibliography{sphericaltilings-v2}

\bigskip

\bigskip
\end{document}